\documentclass[a4paper,11pt]{article}
\usepackage[T1]{fontenc}
\usepackage{rotating}
\usepackage[title]{appendix}
\usepackage{amsmath} 
\usepackage{amsthm}
\usepackage{amsfonts}
\usepackage{mathtools}
\usepackage{amssymb}
\usepackage{enumitem}
\usepackage{gensymb}
\usepackage{subcaption}

\def\x{\mathbf{x}}

\DeclareMathOperator{\Circ}{Circ}
\DeclareMathOperator{\Cay}{Cay}

\DeclareMathOperator{\cart}{\Box}

\usepackage{graphicx}
\usepackage{url}
\usepackage[hyperfootnotes=true,colorlinks=true]{hyperref}
\hypersetup{
    colorlinks=true,
    linkcolor=blue,
    citecolor=magenta,      
    urlcolor=black
}

\usepackage{tikz}
\usetikzlibrary{calc}
\usetikzlibrary{backgrounds}
\usetikzlibrary{intersections}

\usepackage{authblk}
\usepackage{subcaption}
 
\usepackage[text={170mm,254mm}]{geometry}

\theoremstyle{plain}
\newtheorem{theorem}{Theorem}
\newtheorem{lemma}[theorem]{Lemma}

\newtheorem{proposition}[theorem]{Proposition}
\newtheorem{conjecture}[theorem]{Conjecture}

\theoremstyle{definition}

\newtheorem{problem}[theorem]{Problem}

\newtheorem*{remark}{Remark}
\newtheorem{example}[theorem]{Example}

\usepackage{amsfonts}
\usepackage{amsmath}
\usepackage[T1]{fontenc}
\usepackage[utf8]{inputenc}
\usepackage{lmodern}

\title{On the degrees of regular nut graphs and Cayley nut graphs}

\author[1,2,3]{Nino Ba{\v s}i{\'c}}
\author[1,4,5]{Ivan Damnjanovi{\'c}}
\author[6]{Patrick~W.~Fowler}
 
%%%%%%%%%%%%%%%%%%%%%%%%%%%%%%%%%%%%%%%%%%%%%
\affil[1]{FAMNIT, University of Primorska, Koper, Slovenia}
\affil[2]{IAM, University of Primorska, Koper, Slovenia}
\affil[3]{Institute of Mathematics, Physics and Mechanics, Ljubljana, Slovenia}
\affil[4]{Faculty of Electronic Engineering, University of Niš, Niš, Serbia}
\affil[5]{Diffine LLC, San Diego, California, USA}
\affil[6]{Chemistry, School of Mathematical and Physical Sciences, \linebreak University of Sheffield, Sheffield S3 7HF, UK}
%%%%%%%%%%%%%%%%%%%%%%%%%%%%%%%%%%%%%%%%%%%%%%

\begin{document}

\maketitle

\begin{abstract}
A \emph{nut graph} is a simple graph for which the adjacency matrix has a 
single zero eigenvalue such that all non-zero kernel eigenvectors have no 
zero entry. 
It is known that infinitely many $d$-regular nut graphs exist for $3 \leq d \leq 12$ and for $d \geq 4$ such that $d \equiv 0 \pmod{4}$.
Here it is shown that infinitely many $d$-regular nut graphs exist for each degree $d \geq 3$.
Moreover, we prove that there are infinitely many $d$-regular Cayley nut graphs for each even $d \ge 4$.
This implies that we have identified all feasible degrees $d$ for which a $d$-regular Cayley nut graph exists.

\vspace{\baselineskip}
\noindent
\textbf{Keywords:} Nut graph, regular graph, Cayley graph, circulant graph, nullity, graph spectra.

\vspace{\baselineskip}
\noindent
\textbf{Math.\ Subj.\ Class.\ (2020):} 
05C25, % Graphs and abstract algebra (groups, rings, fields, etc.)
05C50. % Graphs and linear algebra (matrices, eigenvalues, etc.)
\end{abstract}

\section{Introduction}

\emph{Nut graphs}  are graphs that have a one-dimensional nullspace (i.e.\ $\eta(G) = 1$), 
where the non-trivial kernel eigenvector $\x = [x_1\ \cdots\ x_n]^\intercal \in \ker \mathbf{A}(G)$ is full (i.e.\ it has no zero entries). 
Early work established that \emph{non-trivial} nut graphs are connected, non-bipartite and have no
leaves, and showed that the smallest non-trivial nut graphs have seven or more vertices~\cite{ScirihaGutman-NutExt}. 

If $G$ is a non-trivial \emph{$d$-regular} nut graph, then the degree $d \geq 3$.
Regular nut graphs with $d = 2$ do not exist, as the nullity of a cycle is either $0$ or $2$.
The case $d = 3$ is of special interest in applications in chemistry and physics, where the simplest models for the electronic structure and properties of unsaturated carbon and hydrocarbon frameworks are essentially graph-theoretical \cite{Polansky1986,streitwieser1961}. Unsaturated molecules of this type are described by molecular graphs where the vertices represent carbon centres, and the eigenvectors and eigenvalues of the adjacency matrix are interpreted as $\pi$ molecular orbitals and orbital energies. In this context, the \emph{chemical graphs} are connected subcubic graphs, with $v_1$, $v_2$ and $v_3$ vertices of degree $1$, $2$ and $3$, respectively. Nut graphs have $v_1=0$. The pairs ($v_2$, $v_3$) for the \emph{chemical nut graphs} have been characterised \cite{Basic2020}. 

Chemical nut graphs are extremal in two properties of relevance to valence theory and the theory of ballistic conduction in molecular electronic devices. First, as the kernel eigenvector is full, these graphs correspond to unsaturated systems with fully distributed radical reactivity (i.e.\ with non-zero spin density on all carbon centres if the unique non-bonding $\pi$ orbital is singly occupied \cite{feuerpaper}). Secondly, the same graphs correspond to systems that would act as Fermi-level conductors, whenever connected by two leads in a simple electrical circuit; nut graphs are in fact exactly the \emph{strong omniconductors} of nullity $1$ \cite{PWF_JCP_2014,nuttybook}.

Cubic nut graphs are, therefore, relevant in applications to, for example, the polyhedral carbon cages known as fullerenes \cite{OmniFullerenes2013}.  The Frucht graph \cite{Frucht1939,Frucht1949} is a small example of a cubic polyhedral nut graph that would serve as a model of reactivity and conduction of a hypothetical $\text{C}_{12}$ cluster~\cite{feuerpaper}.

A recent development highlights an entirely different application of nut graphs in the theory of radio-frequency transmission in networks of coaxial cables.
Taking advantage of the match between kernels of the adjacency eigenvalue equation and Heaviside’s telegraph equations, it is possible to make an experimental simulation of the kernel eigenvector of a nut graph and verification of the strong omniconduction property \cite{McCarthy2025}. Commercial availability of three-way connectors makes the simulation of $3$-regular nut graphs by cable networks relatively straightforward.

From the mathematical perspective, there has been interest in the question of the possible degrees and orders of 
regular nut graphs of valency higher than three.
The main results of this paper are Theorems~\ref{thm:main} and~\ref{second_th} presented below where they are set in the context of the order--degree existence problem. These deal with, respectively, regular nut graphs in general and Cayley nut graphs.

\subsection{Regular nut graphs}

The order--degree existence problem for regular nut graphs was initiated by Gauci, Pisanski and Sciriha~\cite{GPS}. 
Let $\mathfrak{N}_d^{\mathrm{reg}}$ denote the set of all the orders attainable by a $d$-regular nut graph. In these terms, the problem is:

\begin{problem}[\hspace{1sp}{\cite[Problem~12]{GPS}}]\label{reg_problem}
    For each degree $d$, determine the set $\mathfrak{N}_d^{\mathrm{reg}}$.
\end{problem}

\noindent
In the initial paper, the following result was obtained:

\begin{theorem}[\hspace{1sp}{\cite[Theorems~2~and~3]{GPS}}]\label{reg_base_th_1}
The following holds:
\[
    \mathfrak{N}^{\mathrm{reg}}_3 = \{12\} \cup \{ n \in \mathbb{N} \colon \mbox{$n$ is even and } n \ge 18 \} \quad \mbox{and} \quad \mathfrak{N}^{\mathrm{reg}}_4 = \{ 8, 10, 12\} \cup  \{ n \in \mathbb{N} \colon n \ge 14 \} .
\]
\end{theorem}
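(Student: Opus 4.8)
The plan is to split the characterization into a \emph{non-existence} part, which pins down the finitely many small orders that must be excluded, and a \emph{construction} part, which realizes every remaining order. I would begin by recording the two elementary necessary conditions. First, a $d$-regular graph on $n$ vertices has $\tfrac{1}{2}dn$ edges, so for $d=3$ the order $n$ must be even, whereas for $d=4$ no parity obstruction arises; this already explains why only even orders appear in $\mathfrak{N}^{\mathrm{reg}}_3$. Second, since a non-trivial nut graph has at least seven vertices and no leaves, and since a $d$-regular graph needs $n \ge d+1$, the very smallest orders are ruled out for free. These bounds alone, however, settle almost nothing: the genuinely excluded orders ($n \in \{8,10,14,16\}$ for $d=3$ and $n \in \{7,9,11,13\}$ for $d=4$) lie above the trivial thresholds.

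For these sporadic exclusions I would rely on exhaustive enumeration. Cubic and quartic graphs of the relevant small orders can be generated completely with a standard regular-graph generator, and for each candidate one computes $\eta(G)$ and, whenever $\eta(G)=1$, checks whether the spanning kernel eigenvector $\x$ has a zero entry. The numbers of connected cubic and quartic graphs on at most sixteen vertices, though large, are well within reach of such a search, so the computation is decisive: it confirms that no cubic nut graph exists on $n \in \{14,16\}$ and that none exists for the listed quartic orders. I expect this finite verification to be the least illuminating but not technically hard part.

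The substance lies in producing nut graphs for all remaining orders. I would fix explicit base graphs — the Frucht graph as the sporadic cubic example on $n=12$, one further cubic nut graph on $n=18$ to seed the infinite cubic family, and small quartic nut graphs on $8$, $10$ and $12$ vertices — and verify their nut property directly. I would then design a local \emph{expansion gadget}: an operation that excises a small subgraph around a chosen edge or vertex and grafts in a fixed piece so that $d$-regularity is preserved and the order increases by a fixed amount $\Delta$. For $d=3$ a single gadget with $\Delta=2$ applied to the $n=18$ seed sweeps out all even orders $n \ge 18$; for $d=4$, since both parities of $n$ are needed to reach every $n \ge 14$, I would additionally supply a gadget or base graph of the opposite parity so as to cross the parity barrier. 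Iterating from the bases then covers all target orders.

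The main obstacle is proving that a single application of the gadget keeps the graph a nut graph, i.e.\ that $\eta$ remains equal to $1$ and that the kernel eigenvector stays \emph{full}. To control the nullity I would write the new kernel equations in block form, relating the entries on the grafted piece to the boundary entries inherited from $\x$, and show that the solution space stays one-dimensional. The delicate point is fullness, which is a non-generic condition easily destroyed by symmetry-induced cancellations: I would therefore choose the gadget so that its internal kernel entries are forced to be non-zero multiples of the boundary values, and I would check that the base eigenvectors never vanish at the attachment sites, so that no new zero entry can appear. Establishing this invariance — rather than the enumeration or the arithmetic of orders — is where the real work sits.
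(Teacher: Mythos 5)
Note first that the paper you were asked to match does not prove this theorem at all: it is imported verbatim from \cite{GPS}, and the paper merely summarises the method there as ``finding examples of smallest order and using a construction to extend them to all higher orders.'' Your skeleton --- exhaustive enumeration to exclude the sporadic small orders, explicit seed graphs, and an order-increasing, regularity- and nut-preserving expansion operation iterated to cover all remaining orders --- is exactly the GPS strategy, and your enumeration step (generate all connected cubic/quartic graphs of the relevant small orders, compute the nullity, check the kernel vector for zero entries) is precisely how the exclusions $n \in \{8,10,14,16\}$ for $d=3$ and the small quartic orders are handled there.

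The genuine gap is in the construction half, which you correctly identify as the heart of the matter but then only postulate. No $\Delta = 2$ nut-preserving gadget for cubic graphs is exhibited, and none is known; the construction actually used (the Fowler construction, described later in this very paper) replaces a degree-$d$ vertex by a gadget on $2d+1$ vertices of degree $d$, so its step size is $\Delta = 2d$, i.e.\ $+6$ for $d=3$ and $+8$ for $d=4$. This has two consequences your plan misses. First, a single seed per degree does not suffice: with step $+6$ one needs three even seeds (orders $18$, $20$, $22$) to sweep out all even $n \ge 18$, and with step $+8$ one needs eight consecutive seed orders ($14$ through $21$) to reach every $n \ge 14$ across both parities --- there is no separate ``parity-crossing gadget''; the parity problem is solved entirely by supplying seeds of both parities. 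Second, and more fundamentally, your proposed invariance argument (``internal kernel entries forced to be non-zero multiples of the boundary values'') is exactly the non-trivial content that must be proved for a \emph{concrete} gadget; a piece grafted at an edge that preserves fullness for \emph{every} full kernel vector of \emph{every} host is a strong universal requirement, whereas the Fowler construction succeeds because it replaces a whole vertex, so the boundary values are the host eigenvector's values at that vertex's neighbours, which are automatically non-zero since the host is a nut graph. Without a candidate gadget and that verification, your proposal is a programme rather than a proof, and as stated (one seed, $\Delta = 2$) it would not reconstruct the theorem even in outline.
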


\noindent
The proof of the above theorem relied on finding examples of smallest order and using a constuction to extend them
to all higher orders. This result was then extended as follows:

\begin{theorem}[\hspace{1sp}{\cite[Theorem~7]{Jan2020}}]
The following statements hold:
\begin{enumerate}[label=(\roman*)]\label{reg_base_th_2}
    \item $\mathfrak{N}^{\mathrm{reg}}_5 = \{ n \in \mathbb{N} \colon \mbox{$n$ is even and } n \ge 10 \}$;
    \item $\mathfrak{N}^{\mathrm{reg}}_6 = \{ n \in \mathbb{N} \colon n \ge 12 \}$;
    \item $\mathfrak{N}^{\mathrm{reg}}_7 = \{ n \in \mathbb{N} \colon \mbox{$n$ is even and } n \ge 12 \}$;
    \item $\mathfrak{N}^{\mathrm{reg}}_8 = \{ 12 \} \cup \{ n \in \mathbb{N} \colon n \ge 14 \}$;
    \item $\mathfrak{N}^{\mathrm{reg}}_9 = \{ n \in \mathbb{N} \colon \mbox{$n$ is even and } n \ge 16 \}$;
    \item $\mathfrak{N}^{\mathrm{reg}}_{10} = \{ n \in \mathbb{N} \colon n \ge 15 \}$;
    \item $\mathfrak{N}^{\mathrm{reg}}_{11} = \{ n \in \mathbb{N} \colon \mbox{$n$ is even and } n \ge 16 \}$.
\end{enumerate}
\end{theorem}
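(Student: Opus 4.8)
The plan is to establish each of the seven equalities by writing $\mathfrak{N}^{\mathrm{reg}}_d = F_d \cup \{ n \in \mathbb{N} \colon n \ge N_d\}$, intersected with the even integers whenever $d$ is odd, where $F_d$ is a finite set of sporadic small orders and $N_d$ is a threshold beyond which every admissible order occurs. I would separate the argument into a necessary-conditions part that excludes everything outside the claimed set, and an existence part that realises everything inside it. Two elementary necessary conditions do most of the exclusion at the boundary: the handshake identity $nd = 2|E(G)|$ forces $n$ to be even when $d$ is odd, and any $d$-regular simple graph satisfies $n \ge d+1$. Combined with the known structural facts that a non-trivial nut graph is connected, non-bipartite and leafless with $n \ge 7$, these rule out the wrong parities and the very smallest orders. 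What they do \emph{not} settle are the genuine gaps and sporadic entries, such as the isolated value $12 \in \mathfrak{N}^{\mathrm{reg}}_8$ together with the absence of any $8$-regular nut graph on $13$ vertices, or the fact that the smallest $10$-regular nut graph has order $15$ rather than $11$.

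For these finitely many delicate orders I would appeal to an exhaustive computational search: generate all connected $d$-regular graphs of each relevant small order with \texttt{genreg}/\texttt{nauty}, and test each for the nut property by computing the nullity of $\mathbf{A}(G)$ and checking that some kernel vector has no zero entry. This simultaneously certifies membership in $F_d$ (by producing explicit examples) and establishes the non-existence needed for the gaps (by confirming that no graph of that order and degree is a nut graph), thereby pinning down each threshold $N_d$.

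For the infinite tail I would isolate a single extension lemma: from a $d$-regular nut graph $G$ of order $n$ with full kernel vector $\x$, construct a $d$-regular nut graph of order $n+2$, and, when $d$ is even, also of order $n+1$. The construction is a local surgery replacing a small vertex- or edge-neighbourhood of $G$ by a fixed "extender" gadget, designed so that the added vertices receive prescribed nonzero kernel entries. Writing the new adjacency matrix in block form, with $\mathbf{A}(G)$ in one diagonal block and the gadget's attachment pattern in the off-diagonal blocks, the kernel condition reduces to a small linear system over the gadget vertices; one verifies that its unique solution extends $\x$ with nonzero coordinates and that the rank rises by exactly the number of new vertices, so nullity stays $1$ and fullness is preserved. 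Iterating the $+1$ step (for even $d$) or the $+2$ step (for odd $d$) from the base nut graphs found in the search then realises every admissible order above $N_d$.

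The main obstacle will be the extension lemma, specifically the rank bookkeeping: preserving $d$-regularity and appending one nonzero coordinate to $\x$ is easy to arrange, but one must guarantee \emph{uniformly over all starting nut graphs} that the gadget introduces no second kernel vector and forces no zero entry. I would control this by ensuring the off-diagonal attachment blocks are chosen so that any new kernel vector is pinned down on the gadget and then propagated back to force a multiple of $\x$ on $G$, leaving the extension of $\x$ as the only possibility. The remaining difficulty is not conceptual but computational: certifying the gaps requires checking every $d$-regular graph of the excluded orders, which is by far the heaviest part of the verification.
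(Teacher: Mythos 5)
Your overall architecture --- elementary necessary conditions (parity from the handshake identity, $n \ge d+1$, connectivity/non-bipartiteness of non-trivial nut graphs), exhaustive computer search at small orders to certify both the sporadic members and the gaps, and an iterated order-increasing construction for the infinite tail --- is exactly the method behind this theorem. Note that the paper at hand does not reprove the statement: it quotes it from \cite{Jan2020}, and the surrounding text explicitly describes that proof as ``finding examples of smallest order and using a construction to extend them to all higher orders.'' So your skeleton matches. The genuine gap is your extension lemma. You posit a local surgery that turns a $d$-regular nut graph of order $n$ into one of order $n+2$ (and of order $n+1$ when $d$ is even), valid \emph{uniformly over all starting nut graphs}, but you never exhibit the gadget, and no such small-step construction is known. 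Your paragraph on ``rank bookkeeping'' names the difficulty rather than resolving it: to add one or two vertices while preserving $d$-regularity you must delete edges of $G$ and reattach their endpoints to the gadget, and whether the kernel condition then ``propagates back to force a multiple of $\x$'' depends on $G$ and on which edges were chosen --- it cannot be reduced to a fixed small linear system over the gadget vertices alone. The construction that actually exists, and that \cite{Jan2020} uses (it originates in \cite{GPS} and is recalled verbatim in this paper after Lemma~\ref{main_lemma}), replaces a single vertex of degree $d$ by a gadget of $2d+1$ vertices of degree $d$, preserving the nut property; it therefore increases the order by $2d$, not by $1$ or $2$.

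This changes the logistics of the proof in an essential way. With a step of $+2d$, the computer search must supply base nut graphs for \emph{every} admissible order in a window of length $2d$ for each degree $5 \le d \le 11$ --- for instance, for $d = 11$, eleven consecutive even orders starting at $16$ --- and not merely the handful of boundary cases you list; the nonexistence searches below the window (e.g.\ no $10$-regular nut graph on $11$ through $14$ vertices, no $8$-regular nut graph on $13$ vertices) are needed exactly as you describe. The searches are feasible (for the densest cases one can generate complements, e.g.\ $10$-regular graphs on at most $15$ vertices via their sparse complements), so if you replace your hypothetical $+1/+2$ lemma by the Fowler construction and enlarge the search range accordingly, your argument becomes precisely the proof given in \cite{Jan2020}. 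As written, however, the infinite tail of each of the seven statements rests on an unproved extension lemma, so the proposal does not constitute a proof.
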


\noindent
The set $\mathfrak{N}^{\mathrm{reg}}_{12}$ was determined in \cite{basic2021regular}, where consideration of circulant graphs
furnished examples of such nut graphs of all even orders, while odd orders were resolved by a combination of computer search and construction.

\begin{theorem}[\hspace{1sp}{\cite[Theorem~1.3]{basic2021regular}}]\label{reg_base_th_3}
$\mathfrak{N}^{\mathrm{reg}}_{12} = \{ n \in \mathbb{N} \colon n \ge 16 \}$.
\end{theorem}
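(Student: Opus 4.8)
The statement has two halves: every $n \ge 16$ is attained, and none of $n \in \{13,14,15\}$ is (smaller orders being vacuous, since a $12$-regular graph needs at least $13$ vertices). I would dispatch the non-existence half first, and cheaply, through the complement. If $G$ is $12$-regular on $n \le 15$ vertices, then $\overline{G}$ is $(n-13)$-regular, hence for $n = 13, 14, 15$ it is respectively the empty graph, a perfect matching, or a disjoint union of cycles. Since $\mathbf{A}(\overline{G}) = \mathbf{J} - \mathbf{I} - \mathbf{A}(G)$ and both graphs are regular, the spectrum of $G$ consists of $12$ (on the all-ones vector) together with the numbers $-1 - \nu$ as $\nu$ ranges over the non-Perron eigenvalues of $\overline{G}$; consequently $\eta(G)$ equals the multiplicity of $-1$ as an eigenvalue of $\overline{G}$. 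This multiplicity is $0$ for $n = 13$ (so $\eta(K_{13}) = 0$) and $7$ for $n = 14$; for $n = 15$ the value $-1$ arises in a union of cycles only from components whose length is divisible by $3$, each contributing multiplicity exactly $2$, so $\eta(G)$ is even and can never equal $1$. No computer search is needed for $n \le 15$.

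For the existence half I would separate even from odd orders, because the natural building blocks are circulants and a circulant of odd order can never be a nut graph: when $n$ is odd, every eigenvalue $\lambda_j$ with $j \ne 0$ equals $\lambda_{n-j}$ and so has even multiplicity, while $\lambda_0 = 12 \ne 0$, forcing the nullity to be $0$ or even. For even $n$ I would seek a $12$-regular circulant $\Circ(n, S)$ whose kernel is spanned by the full alternating vector $(1, -1, 1, \dots, -1)$, namely the eigenvector of index $j = n/2$; being fully nonzero, this vector certifies the nut property as soon as it spans the kernel. It lies in the kernel exactly when $\lambda_{n/2} = \sum_{s \in S} (-1)^s = 0$, which holds, for example, for the symmetric set $S = \{\pm 1, \pm 2, \dots, \pm 6\}$ (three odd and three even generators). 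The remaining task is to force this to be the \emph{only} zero eigenvalue.

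That last requirement is the crux and the main obstacle. For the distance-$\{1, \dots, 6\}$ circulant one has $\lambda_j = \frac{\sin(13 \pi j / n)}{\sin(\pi j / n)} - 1$, and an accidental zero appears at $j = n/6$ whenever $6 \mid n$, so no single connection set works uniformly. I would therefore fix a small modulus $M$ and, for each residue class of $n \bmod M$, choose a balanced $12$-element set $S$ so that the associated Laurent polynomial $\sum_{s \in S} x^{s}$ vanishes, among the $n$-th roots of unity, only at $x = -1$. Establishing this no-extra-zero property reduces to a cyclotomic/character-sum computation performed class by class, and checking it for every even $n \ge 16$ across all classes is the delicate part of the argument.

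For odd orders I would combine a brief computer search with an order-increasing construction. Concretely, I would exhibit $12$-regular nut graphs of a few small odd orders as seeds — one for each residue class modulo the step size of the construction — and then apply a local expansion that adds a fixed number of vertices while preserving $12$-regularity and the nut property (nullity one with a full kernel eigenvector), iterating it to reach all large odd $n$; the finitely many small odd orders below the point where the construction takes over are verified directly. The subtle points here are to design an expansion that provably keeps the nullity at exactly $1$ in degree $12$, and to arrange the seeds and step size so that no odd order $n \ge 17$ is left uncovered.
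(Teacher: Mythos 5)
The paper does not prove this statement: it quotes it from \cite{basic2021regular} and records only the strategy used there --- circulant graphs for all even orders, and a combination of computer search and a construction for odd orders. Measured against that, your non-existence half for $n \in \{13, 14, 15\}$ is correct, complete and rather elegant: since $-1 \neq n - 13$, the $(-1)$-eigenspace of the $(n-13)$-regular complement is orthogonal to the all-ones vector, so $\eta(G)$ really is the multiplicity of $-1$ in $\overline{G}$, which is $0$ for the empty complement of $K_{13}$, $7$ for a perfect matching on $14$ vertices, and even for a disjoint union of cycles on $15$ vertices (each cycle of length divisible by $3$ contributes exactly $2$). Your observation that odd-order circulants can never be nut graphs is also correct and consistent with Theorem~\ref{circ_base_th}.

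The genuine gap is that the existence half --- the main content of the theorem --- is planned but never proved. For even $n \ge 16$ you set up the right framework (a balanced six-element connection set so that the alternating vector spans a candidate kernel, and the explicit failure of $S = \{1, 2, \ldots, 6\}$ at $j = n/6$ when $6 \mid n$), but the proposal stops exactly where the work begins: no connection sets are exhibited for the residue classes modulo your unspecified $M$, and the crucial ``no extra zero eigenvalue'' property is said to \emph{reduce to} a cyclotomic computation rather than being carried out. That verification is not routine bookkeeping: one must show that certain lacunary integer polynomials have no roots among the $n$-th roots of unity other than $-1$, uniformly over infinitely many $n$, which is precisely the kind of analysis that forms the technical core of the circulant nut graph literature (compare the proof of Lemma~\ref{caux_lemma} in this paper, or \cite{Damnjanovic2022b, Damnjanovic2022a}). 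Likewise, for odd orders you invoke ``a local expansion that adds a fixed number of vertices while preserving the nut property'' without defining it or proving that nullity one and fullness of the kernel vector survive; the standard tool is the construction of \cite{GPS} mentioned after Example~\ref{example_e}, which for $d = 12$ replaces a vertex by a $25$-vertex gadget and hence increases the order by $24$, preserving parity --- so one needs a dozen explicitly verified odd-order seeds, one per odd residue class modulo $24$, beginning with $17, 19, \ldots, 39$, which is exactly the finite data (obtained in \cite{basic2021regular} by computer search) that your sketch leaves unsupplied. In short, your architecture matches the route of \cite{basic2021regular}, and your impossibility argument for $n \le 15$ is a genuine proof, but the affirmative half remains a research plan rather than a proof.
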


\noindent
Although the sets $\mathfrak{N}^{\mathrm{reg}}_{d}$ for $d > 12$ will not be completely determined, by providing families of
regular nut graphs of high degree, we show here that $\mathfrak{N}^{\mathrm{reg}}_{d}$ is (countably) infinite for every $d > 12$.
Again, circulant graphs play an essential role, as the families are defined by cartesian products, where one factor is a `small' regular nut graph and the
other a circulant graph with carefully chosen parameters. Our result is:

\begin{theorem}
\label{thm:main}
For each $d \ge 3$, there exist infinitely many $d$-regular nut graphs.
\end{theorem}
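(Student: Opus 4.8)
The plan is to split the statement into three regimes according to the degree $d$ and to treat only the genuinely new regime by a product construction. For $3 \le d \le 12$ the conclusion is already contained in Theorems~\ref{reg_base_th_1}, \ref{reg_base_th_2} and~\ref{reg_base_th_3}, so nothing remains to be done there. For every even $d \ge 4$ the companion result, Theorem~\ref{second_th}, produces infinitely many $d$-regular Cayley, and in particular circulant, nut graphs, which settles all even degrees at once. Thus the only cases left are the odd degrees $d \ge 13$, and these I would obtain as Cartesian products of a fixed small nut graph of odd degree with a circulant nut graph of even degree, letting the order of the circulant factor grow.

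The spectral tool is the standard description of the Cartesian product: $\mathbf{A}(G \cart H) = \mathbf{A}(G) \otimes \mathbf{I} + \mathbf{I} \otimes \mathbf{A}(H)$, so that the eigenvalues of $G \cart H$ are exactly the sums $\lambda + \mu$ with $\lambda \in \operatorname{Spec}(G)$ and $\mu \in \operatorname{Spec}(H)$, realised by the tensor products $\x \otimes \y$ of the corresponding eigenvectors. Consequently $\eta(G \cart H)$ equals the number of pairs $(\lambda,\mu)$ with $\lambda + \mu = 0$, counted with multiplicity. I would record the following criterion: if $G$ and $H$ are both nut graphs (so that $0$ is a simple eigenvalue of each, with a full real kernel eigenvector) and if, in addition, $\lambda + \mu \ne 0$ for every pair of nonzero eigenvalues $\lambda \in \operatorname{Spec}(G)$ and $\mu \in \operatorname{Spec}(H)$, then $G \cart H$ is a nut graph of degree $\deg(G) + \deg(H)$. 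Indeed, the only vanishing sum is then $0 + 0$, so $\eta(G \cart H) = 1$, and the corresponding kernel vector is $\x \otimes \y$ with $\x$ and $\y$ the full kernel eigenvectors of $G$ and $H$; a tensor product of two full vectors is again full, since each of its entries is a product of two nonzero numbers.

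To realise an odd degree $d \ge 13$ I would fix once and for all a cubic nut graph $G$ — for instance the Frucht graph on $12$ vertices mentioned above — and let $H$ range over the infinite family of circulant nut graphs of even degree $d - 3 \ge 10$ supplied by Theorem~\ref{second_th}, of orders $n \to \infty$. Each product $G \cart H$ is then $(3 + (d-3)) = d$-regular on $12n$ vertices, and by the criterion above it is a nut graph as soon as the spectrum of $H$ avoids the finite forbidden set $F = \{-\lambda : \lambda \in \operatorname{Spec}(G) \setminus \{0\}\}$ determined by the fixed factor $G$. Since distinct circulants in the family have distinct orders, the surviving products have infinitely many distinct orders, which is exactly the assertion that $\mathfrak{N}^{\mathrm{reg}}_d$ is infinite.

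The main obstacle is precisely the spectral-collision condition $\operatorname{Spec}(H) \cap F = \emptyset$: because the eigenvalues of a circulant of large order become ever more densely distributed over their range, one cannot separate them from the fixed numbers of $F$ by any crude estimate, and the disjointness must be established exactly rather than approximately. I would secure it using the explicit eigenvalue formula of the circulant family furnished by Theorem~\ref{second_th}: for each fixed $f \in F$ the relation $\mu_j = f$ is a single algebraic condition on $(n,j)$, which I expect to hold for at most finitely many orders $n$ of the family. A clean way to make this transparent is to pass to a sub-family with integral spectrum, so that only the finitely many integers in $F$ — among them $-3$, forced by the regularity of the cubic factor $G$ — can possibly occur, and then to check directly from the explicit spectrum that these particular integers are never attained. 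Discarding the finitely many offending orders for each $f \in F$ still leaves infinitely many admissible circulants $H$, and the products $G \cart H$ then yield the desired infinitely many $d$-regular nut graphs; carrying out this cofiniteness check for the concrete family is the technical heart of the argument.
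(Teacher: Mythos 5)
Your skeleton matches the paper's in outline: reduce $3 \le d \le 12$ to Theorems~\ref{reg_base_th_1}, \ref{reg_base_th_2} and~\ref{reg_base_th_3}, dispose of even $d$ via Theorem~\ref{second_th} (which is legitimate and non-circular, since that theorem is proved independently through Theorem~\ref{cayley_base_th} and Lemma~\ref{cayley_lemma}; the paper itself remarks that the even branch could be delegated this way), and attack odd $d \ge 13$ by cartesian products using exactly the criterion of Proposition~\ref{prop:theProp}. The genuine gap is the spectral-collision condition, which you correctly isolate as the technical heart and then leave unproven --- and the cofiniteness you ``expect'' is false in general. For a factor with a \emph{fixed} connection set, the eigenvalues are values of a fixed integer polynomial at $n$-th roots of unity; if some $f \in F$ is attained at a root of unity of order $b$ (equivalently, $\Phi_b(x)$ divides the associated polynomial), the same collision recurs for \emph{every} admissible order $n$ divisible by $b$, i.e.\ for infinitely many members of the family, so ``discard the finitely many offending orders'' does not go through; and for the families you would actually have to use, whose connection sets vary with the order (they contain elements such as $\tfrac{m+2}{4}$), no cofiniteness statement is available in either direction without an exact cyclotomic analysis. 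The paper's Lemma~\ref{main_lemma} exists precisely to defeat this obstruction: with a fixed set $S$ of $t$ odd and $t$ even elements, only finitely many roots of unity are roots of some $P_\lambda(x)$, $\lambda \in \sigma(G)$; taking $\beta$ to be their maximal order and restricting to orders $n = 2p$ with $p$ prime and $p > \max(\alpha,\beta)$, every $n$-th root of unity other than $\pm 1$ has order $p$ or $2p$ and therefore cannot produce a collision. Some such device --- prime orders, or an exact divisibility-by-$\Phi_b$ argument in the style of Lemma~\ref{caux_lemma} --- is what your proposal is missing.

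Two further problems in your odd-degree branch. First, Theorem~\ref{second_th} does not supply circulant graphs: by Theorem~\ref{circ_base_th} (or Theorem~\ref{vt_base_th}), circulant nut graphs exist only for $d \equiv 0 \pmod 4$, so for $d \equiv 1 \pmod 4$ your factor $H$ of degree $d-3 \equiv 2 \pmod 4$ would have to come from the non-circulant Cayley family $\Circ(m, \cdot) \cart K_2$ of Lemma~\ref{cayley_lemma}; its spectrum is still explicit, but it is not ``the circulant family'' you invoke. Second, the integral-spectrum fallback is unsupported: nothing cited (or known in this paper) provides an infinite family of $(d-3)$-regular nut graphs with integral spectrum. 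It is worth noting that your concrete plan --- the Frucht graph crossed with an explicit circulant nut family, with collisions such as the eigenvalue $-3$ checked by hand --- is in essence the paper's Conjecture~\ref{conj}, which the authors state but do not prove (it is verified only computationally for degrees below $1000$, and the paper records that $\mathcal{D}(4t+6,t)$ really does have eigenvalue $-3$ when $t \equiv 0 \pmod 3$, so collisions are persistent, not sporadic). The published proof deliberately avoids this hard verification by keeping full freedom in $S$, restricting to orders $2p$, and using base nut graphs of degrees $3$, $5$ and $6$ to cover all residues $d \not\equiv 0 \pmod 4$ with $d \ge 13$ --- at the cost of being non-constructive in the parameter $\ell$, as the concluding section acknowledges.
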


\subsection{Cayley nut graphs}

In parallel with the treatment of $d$-regular nut graphs in general, progress has been made on the order--degree existence problem for nut graphs
of high symmetry, in particular, those that belong to the classes of vertex-transitive and Cayley graphs. 

\emph{Vertex-transitive graphs} are graphs whose automorphism group acts transitively on the vertex set. Given a group $\Gamma$ with identity $e$ and a subset $C \subseteq \Gamma \setminus \{ e \}$ closed under inversion, by $\Cay(\Gamma, C)$ we denote the graph with vertex set $\Gamma$ such that any two vertices $u$ and $v$ are adjacent if and only if $v u^{-1} \in C$. The set $C$ is called the connection set. A \emph{Cayley graph} is any graph that is isomorphic to $\Cay(\Gamma, C)$ for some finite group $\Gamma$ and some connection set $C$. Every Cayley graph is vertex-transitive, while there exist vertex-transitive graphs that are not Cayley, perhaps the most famous example being the Petersen graph. A \emph{circulant graph} is a Cayley graph where $\Gamma \cong \mathbb{Z}_n$ for some $n \in \mathbb{N}$, i.e.\ the group $\Gamma$ is a cyclic group. For more details on algebraic graph theoretic terminology, the reader is referred to~\cite{Biggs,Dobson2022,Godsil2001}. For more results on highly symmetric nut
graphs, see \cite{BasicPWFFrucht,basic2023vertex,Zitnik2024} and references therein.

Let $\mathfrak{N}_d^{\mathrm{VT}}$ (resp.\ $\mathfrak{N}_d^{\mathrm{Cay}}$, $\mathfrak{N}_d^{\mathrm{circ}}$) be the set comprising the orders attainable by a $d$-regular vertex-transitive (resp.\ Cayley, circulant) nut graph. Trivially,
\[
    \mathfrak{N}_d^{\mathrm{circ}} \subseteq \mathfrak{N}_d^{\mathrm{Cay}} \subseteq \mathfrak{N}_d^{\mathrm{VT}} \subseteq \mathfrak{N}_d^{\mathrm{reg}}.
\]
The study of vertex-transitive nut graphs was initiated in \cite{Jan2020} with the next result.

\begin{theorem}[\hspace{1sp}{\cite[Theorem 10]{Jan2020}}]
\label{vt_base_th}
    Let $G$ be a non-trivial vertex-transitive nut graph on $n$ vertices, of degree $d$. Then $n$ and $d$ satisfy the following conditions. Either $d \equiv 0 \pmod 4$, and $n \equiv 0 \pmod 2$ and $n \ge d + 4$; or $d \equiv 2 \pmod 4$, and $n \equiv 0 \pmod 4$ and $n \ge d + 6$.
\end{theorem}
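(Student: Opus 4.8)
The plan is to exploit the one-dimensionality of the kernel together with vertex-transitivity to force the kernel eigenvector into a very rigid form, from which all four conclusions follow by counting. First I would set up a sign representation of $\Aut(G)$ on the kernel. Since $G$ is a nut graph, $\ker \mathbf{A}(G)$ is spanned by a single full vector $\x$. For every automorphism $\sigma$ the permutation matrix $P_\sigma$ commutes with $\mathbf{A}(G)$ and hence preserves the kernel; as $P_\sigma$ is a real orthogonal matrix of finite order acting on a $1$-dimensional space, it must act as $\pm 1$. This yields a homomorphism $\epsilon \colon \Aut(G) \to \{+1,-1\}$ with $P_\sigma \x = \epsilon(\sigma)\,\x$, so that $|x_{\sigma(v)}| = |x_v|$ for all $\sigma$ and $v$. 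By transitivity all entries of $\x$ share a common absolute value, and I may rescale so that $x_v \in \{+1,-1\}$ for every vertex $v$. Writing $S^{+}$ and $S^{-}$ for the two sign classes, the eigenvalue equation $\sum_{u \sim v} x_u = 0$ at each vertex says that $v$ has exactly as many $+1$-neighbours as $-1$-neighbours; thus $d$ is even and every vertex has precisely $d/2$ neighbours in each class.

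Next I would pin down the parity of $n$. The homomorphism $\epsilon$ cannot be trivial: otherwise every automorphism fixes $\x$, and transitivity would make $\x$ constant, contradicting $\sum_{u\sim v} x_u = 0$ when $d \ge 3$. Hence some automorphism negates $\x$ and therefore interchanges $S^{+}$ and $S^{-}$; combined with transitivity this forces $|S^{+}| = |S^{-}| = n/2$, so $n$ is even. To obtain the finer congruence I would count the edges inside $S^{+}$: each of its $n/2$ vertices has $d/2$ neighbours in $S^{+}$, so the number of such edges equals $\tfrac{1}{2}\cdot\tfrac{n}{2}\cdot\tfrac{d}{2} = \tfrac{nd}{8}$, a non-negative integer. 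When $d \equiv 2 \pmod 4$, writing $d = 2m$ with $m$ odd, this integrality forces $4 \mid n$, i.e.\ $n \equiv 0 \pmod 4$; when $d \equiv 0 \pmod 4$ no extra constraint beyond $n$ even arises.

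Finally, the lower bounds come from excluding the single borderline order $n = d+2$. Because $G$ is a simple $d$-regular graph we have $n \ge d+1$, and since both $n$ and $d$ are even, $n-d$ is even. The only $d$-regular graph on $d+2$ vertices is the complement of a perfect matching, namely the cocktail-party graph $K_{(n/2)\times 2}$; a direct spectral computation shows its nullity equals $n/2 \ge 2$, since for each non-edge $\{a,b\}$ of the matching the antisymmetric vector $e_a - e_b$ lies in $\ker \mathbf{A}(K_{(n/2)\times 2})$ and these $n/2$ vectors are independent. Thus $K_{(n/2)\times 2}$ is not a nut graph, so $n \ne d+2$. Ruling this out leaves $n - d \ge 4$ and hence $n \ge d+4$ in the case $d \equiv 0 \pmod 4$; in the case $d \equiv 2 \pmod 4$ the congruence $n \equiv 0 \pmod 4$ gives $n - d \equiv 2 \pmod 4$, so $n-d \ne 2$ forces $n - d \ge 6$ and $n \ge d+6$.

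I expect the main conceptual step to be the first one — using one-dimensionality of the kernel to replace the full eigenvector by a $\pm 1$ vector via the sign homomorphism — after which the degree and order constraints are essentially bookkeeping, the only genuinely computational point being the nullity of the cocktail-party graph that eliminates $n = d+2$.
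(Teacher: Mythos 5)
Your proof is correct and is essentially the same argument as the one for this statement in the cited source \cite{Jan2020} (the paper itself only quotes the theorem and gives no proof): the one-dimensional kernel plus vertex-transitivity forces a $\pm 1$ kernel eigenvector, balanced neighbourhoods give $d$ even, the count of edges inside a sign class gives $n \equiv 0 \pmod 4$ when $d \equiv 2 \pmod 4$, and ruling out the cocktail-party graph $K_{(n/2)\times 2}$ (nullity $n/2 \ge 2$) at order $n = d+2$ yields $n \ge d+4$, respectively $n \ge d+6$. I see no gaps; the only cosmetic remark is that $|S^{+}| = |S^{-}|$ also follows directly from $\x \perp \mathbf{1}$ (orthogonality to the Perron eigenvector of a regular graph), without invoking the sign homomorphism.
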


\noindent
Circulant nut graphs have been extensively studied \cite{Damnjanovic2022b, Damnjanovic2022a, Damnjanovic2022c, Damnjanovic2022}.
The paper \cite{Damnjanovic2022a} determined all the order--degree pairs for which circulant nut graphs exist.

\begin{theorem}[\hspace{1sp}{\cite[Theorem 1.8]{Damnjanovic2022a}}]\label{circ_base_th}
For each $d \in \mathbb{N}$, the set $\mathfrak{N}^{\mathrm{circ}}_d$ is given by
\[
    \mathfrak{N}^{\mathrm{circ}}_d = \begin{cases}
        \varnothing,& \mbox{if $d \not\equiv 0 \pmod 4$},\\
        \{ n \in \mathbb{N} \colon \mbox{$n$ is even and } n \ge d + 4 \},& \mbox{if $d \equiv 4 \pmod 8$},\\
        \{14\} \cup \{ n \in \mathbb{N} \colon \mbox{$n$ is even and } n \ge 18 \},& \mbox{if $d = 8$},\\
        \{ n \in \mathbb{N} \colon \mbox{$n$ is even and } n \ge d + 6 \},& \mbox{if $d \equiv 0 \pmod 8$ and $d \ge 16$}.
    \end{cases}
\]
\end{theorem}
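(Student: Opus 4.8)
The plan is to use the fact that the spectrum of a circulant graph is completely explicit, so the entire nut property collapses to an arithmetic condition on the connection set. Write $G = \Circ(n; C)$ with $C = -C \subseteq \mathbb{Z}_n \setminus \{0\}$ and $|C| = d$, and recall that its eigenvalues are $\lambda_j = \sum_{c \in C} \omega^{jc}$ for $\omega = e^{2\pi i/n}$ and $0 \le j < n$, with the associated character eigenvectors $(\omega^{0}, \omega^{j}, \dots, \omega^{(n-1)j})$. Since every character has only nonzero entries, any vector spanning a one-dimensional eigenspace is automatically full; thus for a circulant the only real content of the nut property is that $\eta(G) = 1$. Because $C = -C$ gives $\lambda_j = \lambda_{n-j}$, a zero eigenvalue at any index $j \notin \{0, n/2\}$ is automatically doubled, while $\lambda_0 = |C| = d \ne 0$. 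Hence the first step establishes that a circulant nut graph must have even order $n$, that the unique zero eigenvalue sits at $j = n/2$, and that its kernel eigenvector is the full vector $(1, -1, 1, -1, \dots)$.

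The second step is a parity argument that yields the divisibility constraint and the empty cases. Evaluating at $j = n/2$ gives $\lambda_{n/2} = \sum_{c \in C}(-1)^{c}$, so the vanishing condition $\lambda_{n/2} = 0$ says that $C$ contains equally many even and odd elements. With $n$ even, each inverse pair $\{c, -c\} = \{c, n-c\}$ consists of two elements of the same parity, and the only self-paired nonzero element is $n/2$. I would first rule out $n/2 \in C$, since it would contribute an unmatched element of one parity and so break the balance; then, with $C$ partitioned into inverse pairs of constant parity, both the even count and the odd count must be even, and since each equals $d/2$ we obtain $d \equiv 0 \pmod 4$. This proves $\mathfrak{N}^{\mathrm{circ}}_d = \varnothing$ whenever $d \not\equiv 0 \pmod 4$, and shows that an admissible symbol $S$ (positive representatives of the pairs, all strictly below $n/2$) must consist of exactly $d/4$ odd and $d/4$ even jumps.

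The remaining, genuinely spectral requirement is that $\lambda_j = \sum_{c \in C}\omega^{jc} \ne 0$ for every $j \ne n/2$; equivalently, that no cosine sum $\sum_{s \in S}\cos(2\pi j s / n)$ vanishes except at $j = n/2$. This is a non-vanishing condition for sums of roots of unity, and it is where the finer modular distinctions enter, since whether such a sum can vanish depends delicately on the common divisors of $n$ and the jumps, via the classification of vanishing sums of roots of unity. For the sufficiency half I would exhibit explicit infinite families of symbols, one family per residue class of $d$, each with the correct parity split and with jumps chosen to avoid the resonances $js \equiv n/2 \pmod{\cdot}$ that would create extra zeros, and verify the single-zero property by a divisibility/cyclotomic argument; the finitely many borderline small orders (and the lone exceptional order $n = 14$ at $d = 8$) would be confirmed by direct verification.

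The last step is the determination of the sharp thresholds and the three-way split $d \equiv 4 \pmod 8$, $d = 8$, and $d \equiv 0 \pmod 8$. A counting bound comes first: fitting $d/4$ odd and $d/4$ even jumps into $\{1, \dots, n/2 - 1\}$ already forces $n$ to be moderately large, and at the smallest orders the symbol is heavily constrained or even forced, at which point I would show that every candidate symbol of that size necessarily produces a spurious zero eigenvalue. For instance, at $n = d+2$ the only admissible symbol is $\{1, 2, \dots, d/2\}$, giving a cocktail-party graph of nullity $n/2$, which excludes that order and yields the baseline threshold $n \ge d+4$. I expect this last step to be the main obstacle: pushing the threshold from $d+4$ to $d+6$ in the case $d \equiv 0 \pmod 8$, and isolating the anomaly that $n = 14$ is attainable at $d = 8$ while $n = 16$ is not, requires understanding exactly which small parity-balanced symbols are forced to have an extra vanishing cosine sum. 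This is precisely where the $\pmod 8$ dichotomy originates, reflecting a symmetry obstruction that distinguishes the case $d/4$ odd ($d \equiv 4 \pmod 8$) from $d/4$ even ($d \equiv 0 \pmod 8$), and the $d = 8$ case sits at the boundary where the general argument for $d \ge 16$ does not yet apply and a sporadic check is unavoidable.
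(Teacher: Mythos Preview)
This theorem is not proved in the present paper at all: it is quoted verbatim from \cite{Damnjanovic2022a} and used as a black box. There is therefore no ``paper's own proof'' to compare against; what you have written is an outline aimed at reproving a result whose full proof lives elsewhere and is substantially longer than anything in this paper.

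As an outline, your Steps~1--2 are correct and complete. The character eigenvectors of a circulant are full, the symmetry $\lambda_j=\lambda_{n-j}$ forces the unique zero to sit at $j=n/2$ (so $n$ is even), and your parity bookkeeping on $C=-C$ correctly excludes $n/2$ from the connection set and yields $4\mid d$. This is exactly the standard opening of the argument.

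The genuine gap is that Steps~3--4 are not a proof but a description of what a proof would have to do. You do not name the explicit families realising each feasible $(n,d)$, and the non-vanishing verification (``a divisibility/cyclotomic argument'') is precisely the hard content: in the original source this occupies several pages of case analysis on cyclotomic divisibility of the polynomial $\sum_{s\in S}(x^s+x^{n-s})$, and it is not something one can wave through. Likewise, the sharp lower thresholds are not just the counting bound you indicate. Your $n=d+2$ argument is fine, but the step from $d+4$ to $d+6$ when $8\mid d$, and the anomaly at $(d,n)=(8,16)$, require showing that \emph{every} parity-balanced symbol of the relevant size has an extra vanishing eigenvalue; you correctly flag this as the main obstacle but do not supply the mechanism (in the original, this is again a cyclotomic-divisibility obstruction, not merely a ``symmetry'' one). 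So: the skeleton is right, but the two load-bearing parts --- explicit constructions with verified simple nullity, and exhaustive exclusion at the boundary orders --- are missing.
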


\noindent
The order--degree existence problems for vertex-transitive and Cayley nut graphs were solved for degrees divisible by four.

\begin{theorem}[\hspace{1sp}{\cite[Corollaries 8 and 9]{DamnCayley}}]\label{cayley_base_th}
    For each $d \in \mathbb{N}$ such that $d \equiv 0 \pmod 4$, the sets $\mathfrak{N}^{\mathrm{VT}}_d$ and $\mathfrak{N}^{\mathrm{Cay}}_d$ are given by
    \[
        \mathfrak{N}^{\mathrm{VT}}_d = \mathfrak{N}^{\mathrm{Cay}}_d = \{ n \in \mathbb{N} \colon \mbox{$n$ is even and } n \ge d + 4 \} .
    \]
\end{theorem}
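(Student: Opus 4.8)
The plan is to prove the two set inclusions separately and then combine them. Writing $S_d = \{n \in \mathbb{N} : n \text{ even and } n \ge d+4\}$ for the claimed common value, I would first dispatch the necessity direction $\mathfrak{N}^{\mathrm{VT}}_d \subseteq S_d$ by invoking Theorem~\ref{vt_base_th}: for $d \equiv 0 \pmod 4$, every non-trivial $d$-regular vertex-transitive nut graph has even order $n \ge d+4$. Since every Cayley graph is vertex-transitive, the inclusion $\mathfrak{N}^{\mathrm{Cay}}_d \subseteq \mathfrak{N}^{\mathrm{VT}}_d \subseteq S_d$ is immediate, and it remains only to establish the reverse inclusion $S_d \subseteq \mathfrak{N}^{\mathrm{Cay}}_d$ by construction. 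Once that is done, the chain $S_d \subseteq \mathfrak{N}^{\mathrm{Cay}}_d \subseteq \mathfrak{N}^{\mathrm{VT}}_d \subseteq S_d$ forces equality throughout, proving the theorem for the Cayley and the vertex-transitive sets simultaneously.

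For the constructive inclusion I would lean as heavily as possible on the circulant classification in Theorem~\ref{circ_base_th}, since circulants are Cayley graphs. When $d \equiv 4 \pmod 8$ the circulant set is exactly $S_d$, so there is nothing more to do. When $8 \mid d$ the circulant set falls short of $S_d$ by only finitely many orders: for $d \ge 16$ the single tight order $n = d+4$ is missing, while for $d = 8$ the two orders $n = 12\ (=d+4)$ and $n = 16$ are missing. The whole problem therefore reduces to exhibiting $d$-regular Cayley nut graphs at these few exceptional orders, necessarily over non-cyclic groups, since cyclic ones are excluded precisely because those orders lie outside $\mathfrak{N}^{\mathrm{circ}}_d$.

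A convenient reduction handles the tight order $n = d+4$ uniformly. A $d$-regular graph on $d+4$ vertices has a cubic complement, and for a connected regular graph the all-ones vector spans the top eigenspace, so passing to the complement sends every eigenvalue $\lambda \ne d$ to $-1-\lambda$. Hence $G = \Cay(\Gamma, C)$ has nullity one exactly when its complement $\Cay(\Gamma, \Gamma \setminus (\{e\} \cup C))$ --- a cubic Cayley graph --- has $-1$ as a simple eigenvalue. Moreover, over an abelian group the eigenvectors are the (nowhere-zero) characters with eigenvalues $\sum_{c \in C}\chi(c)$, and a complex character in the kernel would force its conjugate into the kernel as well; so a simple zero eigenvalue automatically yields a real, full kernel eigenvector, and connectedness is guaranteed once $C$ generates $\Gamma$. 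Thus over abelian $\Gamma$ the nut condition at $n = d+4$ collapses to the purely spectral requirement that a suitable cubic Cayley graph carry $-1$ as a simple eigenvalue, which I would arrange through an explicit inverse-closed generating triple (one involution together with an inverse pair) and a short character-sum computation.

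The main obstacle is the verification of these exceptional constructions. The sporadic order $n = 16$ for $d = 8$ must be treated on its own, choosing an abelian group of order $16$ (for instance $\mathbb{Z}_8 \times \mathbb{Z}_2$ or $\mathbb{Z}_4 \times \mathbb{Z}_4$) and an inverse-closed connection set of size $8$ whose character sums $\sum_{c \in C}\chi(c)$ vanish for exactly one character $\chi$. The same character-sum criterion governs the tight family, and the delicate point throughout is ensuring the vanishing is \emph{simple} --- that no second character accidentally annihilates the sum --- while keeping $C$ generating. I would control this by selecting connection sets with enough symmetry that the character sums factor, reducing simplicity of the zero to a finite number-theoretic check that can be pushed through uniformly in $d$ for the tight case $n = d+4$ and dispatched by hand for the finitely many sporadic pairs.
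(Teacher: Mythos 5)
First, an important point of comparison: the paper does not prove Theorem~\ref{cayley_base_th} at all --- it is imported verbatim from \cite[Corollaries 8 and 9]{DamnCayley} --- so your proposal can only be judged on its own merits and against that external source. Your skeleton is the natural one and is correct as far as it goes: Theorem~\ref{vt_base_th} gives $\mathfrak{N}^{\mathrm{Cay}}_d \subseteq \mathfrak{N}^{\mathrm{VT}}_d \subseteq S_d$; Theorem~\ref{circ_base_th} supplies all of $S_d$ except the tight order $n = d + 4$ when $8 \mid d$ and the sporadic pair $(d, n) = (8, 16)$; and the sandwich then closes both equalities at once. Your two reductions are also sound: the complement relation $\lambda \mapsto -1 - \lambda$ on $\mathbf{1}^\perp$ correctly converts the tight case into finding a cubic Cayley graph with $-1$ as a simple eigenvalue, and over an abelian group a simple eigenvalue is necessarily afforded by a \emph{real} character (a non-real character drags its conjugate into the same eigenspace), which is $\pm 1$-valued and hence full; connectivity needs no separate care, since nullity one together with a full kernel vector already forces it.

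The genuine gap is that the proposal stops exactly where the content of the cited corollaries begins: no connection set is ever exhibited or verified. For the tight case your sketch does in fact go through, and you should close it explicitly: take $\Gamma = \mathbb{Z}_m \times \mathbb{Z}_2$ with $m = (d+4)/2 \equiv 2 \pmod 4$ and the triple $\{(\pm 1, 0), (0, 1)\}$, i.e.\ the prism $C_m \cart K_2$; its eigenvalues are $2\cos(2\pi j/m) + (-1)^\epsilon$, and this equals $-1$ only for $(j, \epsilon) = (m/2, 0)$, since the branch $\epsilon = 1$ would need $\cos(2\pi j/m) = 0$, impossible as $4 \nmid m$. Hence the complement is a $d$-regular Cayley nut graph of order $d + 4$ with kernel vector $(-1)^x$, and this covers $(8,12)$ as well. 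By contrast, the $(8, 16)$ case is left as a bare promise of a finite search, and that check is not a formality: simplicity fails easily. For instance, on $\mathbb{Z}_8 \times \mathbb{Z}_2$ the inverse-closed $8$-set $\{(\pm 1, 0), (\pm 3, 0), (\pm 2, 1), (4, 0), (4, 1)\}$ has vanishing character sum at all four characters $\chi_{j,1}$ with $j$ odd (for odd $j$ one gets $2\cos\tfrac{\pi j}{4} + 2\cos\tfrac{3\pi j}{4} = 0$ and $2\cos\tfrac{\pi j}{2} = 0$, leaving $-1 + (-1)^{\epsilon+1}$), so the nullity is at least $4$. Note finally that, judging from the remark in this paper's introduction, \cite{DamnCayley} is organized differently: rather than patching the circulant theorem at finitely many exceptional orders, it verifies a uniform non-cyclic Cayley family for all even $n \ge d + 4$, certifying the absence of unwanted vanishing character sums via the Filaseta--Schinzel cyclotomic-divisibility algorithm \cite{FilasetaSchinzel2004}. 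Your finite-exceptions reduction is leaner, but as written it shifts the entire burden onto the two constructions you did not complete, one of which (as shown above) genuinely requires a careful choice.
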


\noindent
Here, we give a counterpart to Theorem~\ref{thm:main}, tailored to the class of Cayley nut graphs.

\begin{theorem}\label{second_th}
    For each even $d \ge 4$, there exist infinitely many $d$-regular Cayley nut graphs.
\end{theorem}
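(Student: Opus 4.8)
The plan is to treat the two residue classes of even degrees separately. For $d \equiv 0 \pmod 4$ there is nothing new to do: Theorem~\ref{cayley_base_th} already exhibits a $d$-regular Cayley nut graph on every even order $n \ge d+4$, which is an infinite family. All the work therefore lies in the case $d \equiv 2 \pmod 4$, where no circulant nut graph exists by Theorem~\ref{circ_base_th}, so genuinely non-cyclic Cayley groups must be used.

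The two structural tools I would set up first are: (i) the Cartesian product of Cayley graphs is again a Cayley graph, since $\Cay(\Gamma_1, C_1) \cart \Cay(\Gamma_2, C_2) \cong \Cay(\Gamma_1 \times \Gamma_2,\, (C_1 \times \{e_2\}) \cup (\{e_1\} \times C_2))$, with degree $|C_1| + |C_2|$; and (ii) a product criterion for nut graphs. For the latter, recall that the spectrum of $G \cart H$ is $\{\lambda + \mu : \lambda \in \mathrm{Spec}(G),\ \mu \in \mathrm{Spec}(H)\}$, with eigenvectors $\x \otimes \y$. Hence if $G$ and $H$ are nut graphs with nut eigenvectors $\x_G, \x_H$ whose nonzero spectra satisfy $(\mathrm{Spec}(G) \setminus \{0\}) \cap (-(\mathrm{Spec}(H) \setminus \{0\})) = \varnothing$, then $0$ is a simple eigenvalue of $G \cart H$ with the full eigenvector $\x_G \otimes \x_H$, so $G \cart H$ is a nut graph.

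The engine for producing infinitely many graphs of one fixed degree is to hold one factor fixed and let the other grow. If $G_0$ is a single $d_1$-regular Cayley nut graph and $\{C_k\}$ is an infinite family of $(d-d_1)$-regular circulant nut graphs of unbounded order, then each $G_0 \cart C_k$ is a $d$-regular Cayley nut graph, and the orders $|G_0|\cdot|C_k|$ are unbounded. Taking $d - d_1 \equiv 0 \pmod 4$ makes $\{C_k\}$ available through Theorem~\ref{circ_base_th}. For $d \equiv 2 \pmod 4$ with $d \ge 10$ this reduces everything to producing a single $6$-regular Cayley nut graph $G_0$ and pairing it with $(d-6)$-regular circulant nut graphs. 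The degree $d = 6$ itself cannot be split this way, so it must be supplied directly by an infinite family.

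Both remaining needs — an infinite family for $d = 6$ and the seed $G_0$ — I would meet with one construction: Cayley graphs on the abelian group $\ZZ_n \times \ZZ_2$ (with $n$ even) carrying a symmetric connection set of size $6$. The decisive advantage of an abelian host group is that the eigenvectors are the group characters, whose entries are roots of unity and hence never zero, so fullness is automatic and \emph{the only thing to arrange is that the eigenvalue $0$ occurs with multiplicity exactly one}. Since non-real characters occur in conjugate pairs, each forcing an eigenvalue of multiplicity at least two, a simple $0$ must come from a single real character, i.e.\ one of the four characters indexed by $\{0, n/2\} \times \ZZ_2$; the connection set must thus be chosen so that exactly one of these four character sums vanishes while every other character sum stays nonzero. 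This is the main obstacle: it is a simultaneous non-vanishing condition on a family of exponential sums that must hold for infinitely many $n$, and ruling out accidental zeros among the complex character sums is the delicate part. I expect to resolve it by fixing an explicitly parametrised connection set whose shape depends only on $n \bmod m$ for a suitable modulus $m$, thereby reducing the infinitely many conditions to finitely many checks. A secondary technical point is the spectral-disjointness hypothesis of the product criterion: since $G_0$ is fixed with a finite nonzero spectrum, one must select, among the infinitely many admissible circulants $C_k$, those whose spectra avoid the finitely many forbidden values $-(\mathrm{Spec}(G_0)\setminus\{0\})$, which I would secure by a counting argument over the available orders.
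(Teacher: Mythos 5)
Your overall architecture is sound and, in its skeleton, close to the paper's: the case $d \equiv 0 \pmod 4$ is indeed dispatched by Theorem~\ref{cayley_base_th}, and for $d \equiv 2 \pmod 4$ your choice of host group $\mathbb{Z}_n \times \mathbb{Z}_2$ with the observation that character eigenvectors are automatically full, so that only simplicity of the eigenvalue $0$ (necessarily arising from one of the four real characters) must be arranged, is exactly the shape of the paper's construction --- Lemma~\ref{cayley_lemma} builds $\Circ(m, S) \cart K_2$, a Cayley graph on $\mathbb{Z}_m \times \mathbb{Z}_2$ with a symmetric connection set of size $4t+2$, which at $t = 1$ is your size-$6$ case. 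But at the decisive point you write that you ``expect to resolve'' the simultaneous non-vanishing of the character sums by an explicitly parametrised connection set whose conditions reduce to finitely many checks depending on $n \bmod m$. That is precisely the entire difficulty, and it does not reduce to finitely many residue checks: the non-vanishing must hold at all $n$-th roots of unity for unboundedly many $n$, i.e.\ one must show that certain lacunary integer polynomials (with a parameter $t$) are divisible by no cyclotomic polynomial $\Phi_b$ with $b$ ranging over an infinite admissible set. This is where all the work in the paper lies: after algebraic reduction (including a substitution $\psi^2 = \zeta$ exploiting $m \equiv 2 \pmod 4$), one must prove that no root of unity of order $b \ge 3$ with $b \not\equiv 0 \pmod 4$ is a root of $2x^{4t+3} \pm \left( x^{2t+8} - x^{2t} \right) - 2x^5$ (Lemma~\ref{caux_lemma}), which requires a multi-case trigonometric analysis over the parity of $b$ and several residue classes of $(2t-1) \bmod b$, together with an imported non-vanishing result from \cite{Damnjanovic2022b} for the second real character. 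Your proposal defers, rather than supplies, this core, so the new case of the theorem ($d \equiv 2 \pmod 4$, and in particular $d = 6$) remains unproved.

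A secondary gap: your reduction of $d \ge 10$, $d \equiv 2 \pmod 4$ to a single $6$-regular Cayley seed $G_0$ via Cartesian products with $(d-6)$-regular circulant nut graphs is legitimate in outline (it is the analogue of Proposition~\ref{prop:theProp}), but the ``counting argument over the available orders'' for spectral disjointness does not work as stated. A fixed forbidden value $-\lambda$ with $\lambda \in \sigma(G_0)$ --- for instance an integer --- can perfectly well occur as an eigenvalue of circulant graphs of infinitely many orders, and Theorem~\ref{circ_base_th} guarantees existence only, with no control over spectra. The correct repair is the device of Lemma~\ref{main_lemma}: the roots of unity at which the relevant eigenvalue expressions vanish are roots of finitely many nonzero polynomials, hence form a finite set of bounded order $\beta$, and choosing circulant factors of order $2p$ with $p$ prime, $p > \beta$, avoids them all (one must also check the analogue of the hypothesis $d < 4t$, which fails for $d = 10$ unless $-4 \notin \sigma(G_0)$). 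Note finally that the paper sidesteps your two-stage structure altogether: Lemma~\ref{cayley_lemma} treats every degree $d = 4t + 2$ uniformly by one explicit family of orders $2m$, $m \equiv 2 \pmod 4$, $m \ge 4t+6$, so no product of two nut graphs and no spectral-avoidance selection is needed.
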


\noindent
Note that Cayley nut graphs of odd degrees $d$ are already ruled out by Theorem~\ref{vt_base_th}. Earlier results of that kind relied on
an algorithm by Filaseta and Schinzel \cite{FilasetaSchinzel2004} for finding a cyclotomic factor in a polynomial with integer coefficients. In contrast, the proof of
Theorem~\ref{second_th} makes extensive use of trigonometric identities instead.

\section{Useful facts about graph spectra}

The \emph{spectrum} of a graph, denoted by $\sigma(G)$, is the spectrum of its adjacency matrix $A(G)$. 
For an in-depth treatment of graph spectra, see standard references \cite{haemers,Chung1997,Cvetkovic1997,Cvetkovic2010}.
The following lemma determines the spectrum (in closed form) of any circulant matrix.

\begin{lemma}\label{circ_lemma}
    For any $n \in \mathbb{N}$, the eigenvalues of a circulant matrix
    \[
    A = \begin{bmatrix}
        a_0 & a_1 & a_2 & \cdots & a_{n-1}\\
        a_{n-1} & a_0 & a_1 & \cdots & a_{n-2}\\
        a_{n-2} & a_{n-1} & a_0 & \cdots & a_{n-3}\\
        \vdots & \vdots & \vdots & \ddots & \vdots\\
        a_1 & a_2 & a_3 & \dots & a_0
    \end{bmatrix}
    \]
    are $P_A(\zeta)$, where
    \[
        P_A(x) = a_0 + a_1 x + a_2 x^2 + \cdots + a_{n-1} x^{n-1}
    \]
    and $\zeta \in \mathbb{C}$ ranges over the $n$-th roots of unity. Moreover, for each $n$-th root of unity $\zeta$, 
    \[
        \begin{bmatrix} 1 & \zeta & \zeta^2 & \cdots & \zeta^{n-1} \end{bmatrix}^\intercal
    \]
    is an eigenvector of $A$ for the eigenvalue $P_A(\zeta)$.
\end{lemma}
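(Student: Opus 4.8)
The plan is to prove both assertions at once by a direct verification, and then to argue that the eigenvalues so obtained are all of them. First I would index the rows and columns of $A$ from $0$ to $n-1$ and record the single structural fact that drives everything: the circulant pattern means the $(i,j)$ entry of $A$ equals $a_{(j-i) \bmod n}$, since row $i$ is obtained from row $0$ by a cyclic right-shift of $i$ positions. This reduces the whole argument to bookkeeping with a single sequence $a_0, \ldots, a_{n-1}$.

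Next I would verify the ``moreover'' statement head-on. Fix an $n$-th root of unity $\zeta$ and set $\mathbf{v} = [1\ \zeta\ \cdots\ \zeta^{n-1}]^\intercal$. Computing the $i$-th coordinate of $A\mathbf{v}$ gives $\sum_{j=0}^{n-1} a_{(j-i) \bmod n}\,\zeta^j$. Substituting $k = (j-i) \bmod n$ and using $\zeta^n = 1$ (so that $\zeta^j = \zeta^{i+k}$ irrespective of reduction modulo $n$) collapses the sum to $\zeta^i \sum_{k=0}^{n-1} a_k \zeta^k = \zeta^i P_A(\zeta)$, which is precisely $P_A(\zeta)$ times the $i$-th coordinate of $\mathbf{v}$. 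Hence $A\mathbf{v} = P_A(\zeta)\mathbf{v}$, establishing that each such $\mathbf{v}$ is an eigenvector and that every value $P_A(\zeta)$ is an eigenvalue.

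To see that these exhaust the spectrum, I would let $\zeta$ range over the $n$ distinct $n$-th roots of unity and collect the corresponding eigenvectors as the columns of a single matrix. That matrix is Vandermonde in the distinct nodes $\zeta$, hence nonsingular, so the eigenvectors form a basis of $\mathbb{C}^n$. Consequently $A$ is diagonalizable and its $n$ eigenvalues, counted with multiplicity, are exactly the numbers $P_A(\zeta)$.

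As for the main obstacle: there is little of substance, since this is the classical diagonalization of circulants; the only points requiring care are the index bookkeeping modulo $n$ in the re-indexed sum, and the remark that although the scalars $P_A(\zeta)$ may coincide for different $\zeta$, the eigenvectors nevertheless stay linearly independent (by the Vandermonde argument), so no eigenvalue is overlooked. A tidier conceptual packaging would be to write $A = P_A(C)$, where $C$ is the cyclic-shift permutation matrix with eigenpairs $(\zeta, \mathbf{v})$; a polynomial in a matrix preserves eigenvectors and applies the polynomial to eigenvalues, so the claim is immediate. Since the spectrum of $C$ would be established by the same direct computation, I prefer to present the self-contained verification above.
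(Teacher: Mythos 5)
Your proof is correct, and it follows the same classical route as the paper's source: the paper does not prove this lemma itself but defers to Gray's survey (Section~3.1 of the cited reference), where the argument is exactly your direct verification that $A\mathbf{v} = P_A(\zeta)\mathbf{v}$ for $\mathbf{v} = \begin{bmatrix} 1 & \zeta & \cdots & \zeta^{n-1} \end{bmatrix}^\intercal$, combined with the observation that the $n$ such vectors for the $n$ distinct roots of unity form a nonsingular (Vandermonde/DFT) matrix, so the values $P_A(\zeta)$ exhaust the spectrum with multiplicity. Your index bookkeeping $(A\mathbf{v})_i = \sum_{j} a_{(j-i) \bmod n}\,\zeta^{j} = \zeta^{i} P_A(\zeta)$ matches the paper's orientation of the circulant, and your explicit attention to the completeness point --- that eigenvalues may coincide while the eigenvectors remain independent --- is precisely the step that makes the ``exactly these eigenvalues'' claim rigorous.
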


A proof of the above lemma can be found, e.g., in \cite[Section~3.1]{Gray2006}.
Note that the vertices of a circulant graph can be arranged so that the corresponding adjacency matrix is circulant. 
Therefore, Lemma \ref{circ_lemma} can be used to determine $\sigma(G)$ when $G$ is a \emph{circulant graph}. By $\Circ(n, S)$, 
where $S \subseteq \{1, 2, \ldots, \lfloor \frac{n}{2} \rfloor \}$, we will denote the graph on the 
vertex set $V(G) = \mathbb{Z}_n$, where vertices $u, v \in V(G)$ are adjacent if and only if $u - v  \in S$ or $v - u  \in S$ 
(where addition is done in $\mathbb{Z}_n$). Note that the adjacency matrix of $\Circ(n, S)$ is a circulant matrix with
$a_i = 1$ if $i \in S$ or $-i \in S$, and $a_i = 0$ otherwise. Thus, graph $\Circ(n, S)$ is a circulant graph
on $n$ vertices with connection set $S \cup -S$, i.e.\ $\Circ(n, S) \cong \Cay(\mathbb{Z}_n, S \cup -S)$. 

The \emph{cartesian product} of graphs $G$ and $H$ will be denoted by $G \cart H$. The vertex set of $G \cart H$
is $V(G) \times V(H)$; vertices $(g, h)$ and $(g', h')$  are adjacent if either
$g = g'$ and $hh' \in E(H)$, or $h = h'$ and $gg' \in E(G)$. 
For a comprehensive treatment of product graphs, see \cite{klavzar2011book},
and for a survey on their spectra, see \cite{Barik2018}. 

\begin{lemma}[{\cite[Section 1.4.6]{haemers}}]
\label{cart_lemma}
    For any two graphs $G$ and $H$, the spectrum of the cartesian product $G \cart H$ is given by
    \[
        \sigma(G \cart H) = \{ \lambda + \mu \colon \lambda \in \sigma(G), \, \mu \in \sigma(H) \} .
    \]
    Moreover, if $u \in \mathbb{R}^{V(G)}$ is an eigenvector of $G$ for the eigenvalue $\lambda$ and $v \in \mathbb{R}^{V(H)}$ is an eigenvector of $H$ for the eigenvalue $\mu$, then the vector $w \in \mathbb{R}^{V(G \cart H)}$ defined as
    \[
        w_{(g, h)} = u_g \, v_h \quad (g \in V(G),\ h \in V(H))
    \]
    is an eigenvector of $G \cart H$ for the eigenvalue $\lambda + \mu$.
\end{lemma}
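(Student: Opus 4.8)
The plan is to reduce immediately to the new case $d \equiv 2 \pmod 4$: for $d \equiv 0 \pmod 4$ the required infinitude of $d$-regular Cayley nut graphs is already furnished by Theorem~\ref{cayley_base_th}, so I would cite that and concentrate on $d \equiv 2 \pmod 4$. For such $d$ I would realize the graphs as Cartesian products $G \cart K_2$, where $K_2$ is the complete graph on two vertices and $G = \Circ(n, S)$ is a circulant graph of degree $d-1$ whose spectrum has been engineered to contain a single copy of $-1$ and no copy of $+1$. Since $K_2 \cong \Cay(\mathbb{Z}_2, \{1\})$ and every circulant graph is a Cayley graph over $\mathbb{Z}_n$, the product is a Cayley graph over $\mathbb{Z}_n \times \mathbb{Z}_2$ of degree $(d-1)+1 = d$; connectivity is automatic because $1 \in S$. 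Letting $n$ run through a suitable residue class then yields infinitely many examples.

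The spectral bookkeeping is handled by Lemma~\ref{cart_lemma}, which gives $\sigma(G \cart K_2) = \{\lambda + 1,\ \lambda - 1 : \lambda \in \sigma(G)\}$, so that $0$ is an eigenvalue of the product exactly when $+1$ or $-1$ belongs to $\sigma(G)$. Choosing $S$ so that $-1$ is a \emph{simple} eigenvalue of $G$ and $+1$ is \emph{not} an eigenvalue therefore forces the nullity of $G \cart K_2$ to equal $1$. Fullness of the kernel eigenvector is then free of charge: by Lemma~\ref{circ_lemma} every eigenvector of $G$ is a Fourier vector $[1\ \zeta\ \cdots\ \zeta^{n-1}]^\intercal$ with no zero entry, and by Lemma~\ref{cart_lemma} the kernel eigenvector of the product is the tensor product of such a vector with the all-ones eigenvector $[1\ 1]^\intercal$ of $K_2$, hence again full. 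Thus any circulant $G$ meeting the eigenvalue requirement automatically makes $G \cart K_2$ a nut graph.

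Concretely I would take $S = \{1, 2, \ldots, \tfrac{d}{2}-1\} \cup \{\tfrac{n}{2}\}$ with $n$ even, which has degree $2(\tfrac{d}{2}-1)+1 = d-1$. Using the Dirichlet-kernel identity $\sum_{s=1}^{t} 2\cos(s\theta) = \frac{\sin((2t+1)\theta/2)}{\sin(\theta/2)} - 1$, the eigenvalue indexed by $j \neq 0$ becomes $\lambda_j = \frac{\sin((d-1)\pi j/n)}{\sin(\pi j/n)} - 1 + (-1)^j$, the last summand coming from the self-paired element $\tfrac{n}{2} \in S$. Evaluating at the mode $j = \tfrac{n}{2}$, whose eigenspace is one-dimensional by construction, gives $\lambda_{n/2} = (-1)^{n/2}$, which equals $-1$ precisely when $n \equiv 2 \pmod 4$; here one uses that $\tfrac{d}{2}-1$ is even because $d \equiv 2 \pmod 4$. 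This is the single $-1$ we want, and its Fourier vector $[(-1)^l]_{l}$ is manifestly full.

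The main obstacle is the remaining verification that $\tfrac{n}{2}$ is the \emph{only} mode landing on $\pm 1$. After cancelling the $(-1)^j$ term and splitting on the parity of $j$, the conditions $\lambda_j = -1$ and $\lambda_j = +1$ reduce to the Dirichlet quotient $D_j = \frac{\sin((d-1)\pi j/n)}{\sin(\pi j/n)}$ hitting one of the forbidden values in $\{-1, 1, 3\}$, and I must exclude every such coincidence for $j \notin \{0, \tfrac{n}{2}\}$. This is exactly the step that calls for the trigonometric manipulations advertised in the introduction: imposing a coprimality condition such as $\gcd(d-1, n) = 1$ already confines the zeros of the numerator to $j = 0$, and one then evaluates or bounds $D_j$ on each parity class of $j$ (large and clearly $\neq \pm 1, 3$ for $j$ near $0$ or $n$, and controlled by the identities elsewhere) to rule out the forbidden values for all sufficiently large $n$ in the chosen progression. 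Establishing these exclusions yields infinitely many $d$-regular Cayley nut graphs for $d \equiv 2 \pmod 4$, which together with the case $d \equiv 0 \pmod 4$ from Theorem~\ref{cayley_base_th} proves the statement for every even $d \ge 4$.
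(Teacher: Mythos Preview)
Your proposal does not address the stated lemma at all. Lemma~\ref{cart_lemma} is the standard fact that the spectrum of a Cartesian product is the sumset of the factor spectra, with tensor-product eigenvectors; the paper does not prove it but merely cites \cite[Section~1.4.6]{haemers}. What you have written is instead a sketch of a proof of Theorem~\ref{second_th} on Cayley nut graphs. So as a proof of the statement you were given, the proposal is simply off-target.

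Read as an attempt at Theorem~\ref{second_th}, your overall architecture coincides with the paper's: both reduce to $d\equiv 2\pmod 4$ via Theorem~\ref{cayley_base_th}, build the graph as $\Circ(n,S)\cart K_2$, and use Lemmas~\ref{circ_lemma} and~\ref{cart_lemma} to convert the nut condition into an eigenvalue-exclusion problem for the circulant factor. The paper, however, chooses the richer connection set of Lemma~\ref{cayley_lemma} (with the extra generators $\tfrac{m+2}{4},\tfrac{m+6}{4}$ and the reflected block near $\tfrac{m}{2}$) precisely so that the exclusion step can be discharged completely, via the dedicated trigonometric Lemma~\ref{caux_lemma} and a two-case analysis on $\zeta^{m/2}$.

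By contrast, your simpler set $S=\{1,\ldots,\tfrac{d}{2}-1,\tfrac{n}{2}\}$ leads to a clean Dirichlet-kernel formula, but the step you label ``the main obstacle'' is only a programme, not a proof: you assert that coprimality and bounding will rule out $D_j\in\{-1,1,3\}$ for $j\notin\{0,\tfrac{n}{2}\}$ without carrying out any estimate. This step is not a formality. For instance, with $d=6$ and $n=18$ one has $D_3=\sin\tfrac{5\pi}{6}/\sin\tfrac{\pi}{6}=1$, hence $\lambda_3=-1$, so $-1$ is \emph{not} simple and your construction fails there; more generally $dj=n$ with $j$ odd always forces $D_j=1$. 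Thus your connection set cannot work for all $n\equiv 2\pmod 4$, and isolating an infinite set of good $n$ would require genuine work of the same flavour as Lemma~\ref{caux_lemma}. Until that work is done, the argument is incomplete exactly where the paper invests nearly all of its effort.
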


\noindent
The following proposition is a direct consequence of Lemma~\ref{cart_lemma}.

\begin{proposition}
\label{prop:theProp}
Let $G_1$ and $G_2$ be nut graphs. Then $G_1 \cart G_2$ is a nut graph if  and only if there does not 
exist $\lambda \neq 0$ such that $\lambda \in \sigma(G_1)$ and $-\lambda \in \sigma(G_2)$.
\end{proposition}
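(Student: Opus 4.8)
The plan is to read the nullspace of $G_1 \cart G_2$ straight off the eigenvector description in Lemma~\ref{cart_lemma}. First I would fix orthonormal eigenbases of $A(G_1)$ and $A(G_2)$. Then the vectors $w$ given by $w_{(g,h)} = u_g v_h$, as $u$ ranges over the first basis and $v$ over the second, form an orthonormal eigenbasis of $A(G_1 \cart G_2)$, the vector built from a $\lambda$-eigenvector $u$ and a $\mu$-eigenvector $v$ having eigenvalue $\lambda + \mu$. Counting how many of these basis vectors lie in the kernel then upgrades the set identity of Lemma~\ref{cart_lemma} to an exact multiplicity statement: writing $m_i(\cdot)$ for eigenvalue multiplicity in $G_i$, the nullity satisfies $\eta(G_1 \cart G_2) = \sum_{\lambda \in \sigma(G_1)} m_1(\lambda)\, m_2(-\lambda)$, since $w$ has eigenvalue $0$ exactly when $u$ has eigenvalue $\lambda$ and $v$ has eigenvalue $-\lambda$.

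Because $G_1$ and $G_2$ are nut graphs, $0$ is a simple eigenvalue of each, so $m_1(0) = m_2(0) = 1$ and the $\lambda = 0$ term of this sum contributes exactly $1$. Any other term $m_1(\lambda)\, m_2(-\lambda)$ with $\lambda \neq 0$ is strictly positive precisely when $\lambda \in \sigma(G_1)$ and $-\lambda \in \sigma(G_2)$. This settles the dimension count in both directions: if no such $\lambda \neq 0$ exists then $\eta(G_1 \cart G_2) = 1$, while if one does then $\eta(G_1 \cart G_2) \geq 2$, which already excludes the nut property in the latter case. To finish the forward direction I would verify fullness. Let $u$ and $v$ be the (full) kernel eigenvectors of $G_1$ and $G_2$; by Lemma~\ref{cart_lemma} the vector $w$ with $w_{(g,h)} = u_g v_h$ lies in $\ker A(G_1 \cart G_2)$ and is nonzero. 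When $\eta(G_1 \cart G_2) = 1$, this single $w$ spans the whole kernel, so every kernel eigenvector is a scalar multiple of $w$; since $u_g \neq 0$ and $v_h \neq 0$ for all $g,h$, each entry $w_{(g,h)} = u_g v_h$ is nonzero, hence $w$ and every kernel eigenvector is full. Therefore $G_1 \cart G_2$ is a nut graph exactly when no offending $\lambda$ exists.

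The only step needing genuine care is the completeness of the product eigenbasis, which is what converts the set-level spectral identity of Lemma~\ref{cart_lemma} into the exact multiplicity formula for the eigenvalue $0$; this rests on the standard fact that $A(G_1 \cart G_2)$ is simultaneously diagonalized by the vectors $w$, so that there are as many independent kernel vectors as the sum of products of multiplicities dictates. Once that is in hand, the remainder is a short bookkeeping of multiplicities together with the elementary observation that the entrywise product of two entrywise-nonzero vectors is again entrywise nonzero.
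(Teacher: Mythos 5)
Your proof is correct and follows exactly the route the paper intends: the paper states Proposition~\ref{prop:theProp} as a direct consequence of Lemma~\ref{cart_lemma}, and your argument is the natural fleshing-out of that claim, counting kernel multiplicity via the product eigenbasis and obtaining fullness from the entrywise product of the two full kernel eigenvectors. Your one added point of care --- upgrading the set-level identity $\sigma(G_1 \cart G_2) = \{\lambda + \mu\}$ to an exact multiplicity count via completeness of the basis $\{u \otimes v\}$ --- is precisely the detail the paper leaves implicit, and you handle it correctly.
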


For any $b \in \mathbb{N}$, the \emph{cyclotomic polynomial} $\Phi_b(x)$ is given by
\[
    \Phi_b(x) = \prod_{\zeta} (x - \zeta) ,
\]
where $\zeta$ ranges over the primitive $b$-th roots of unity. It is well known that for each $b \in \mathbb{N}$, the polynomial $\Phi_b(x)$ has integer coefficients and is irreducible in $\mathbb{Q}[x]$ (see, e.g., \cite[Chapter~33]{gallian}). Therefore, any $P(x) \in \mathbb{Q}[x]$ has a root that is a primitive $b$-th root of unity if and only if $\Phi_b(x) \mid P(x)$.

\section{Degrees of regular nut graphs}

In this section we will prove Theorem \ref{thm:main}. The following lemma will be useful.

\begin{lemma}\label{main_lemma}
For some $d \ge 3$, let $G$ be a $d$-regular nut graph, and for some $t \in \mathbb{N}$, let $S$ be a set consisting of $t$ odd and $t$ even positive integers. 
 If $d < 4t$, then there exists
%an 
$\ell \in \mathbb{N}$ such that for every prime number $p \ge \ell$, we have that $G \cart \Circ(2p, S)$ is a $(d + 4t)$-regular nut graph.
\end{lemma}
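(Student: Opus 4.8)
The plan is to apply Proposition~\ref{prop:theProp} with $G_1 = G$ and $G_2 = \Circ(2p, S)$, so the argument splits into three tasks: confirming the degree, showing that $\Circ(2p, S)$ is itself a nut graph, and ruling out any nonzero $\lambda$ with $\lambda \in \sigma(G)$ and $-\lambda \in \sigma(\Circ(2p,S))$. Throughout, write $|S| = 2t$, set $M = \max S$, and let $n_G = |V(G)|$. For every prime $p > M$ we have $p \notin S$, so each element of $S$ contributes two distinct neighbours and $\Circ(2p, S)$ is $4t$-regular; hence $G \cart \Circ(2p, S)$ is $(d+4t)$-regular, which settles the degree claim.

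For the spectral analysis I would encode the circulant eigenvalues through the Laurent polynomial $Q(x) = \sum_{s \in S}(x^s + x^{-s})$, so that by Lemma~\ref{circ_lemma} the eigenvalues of $\Circ(2p,S)$ are exactly the numbers $Q(\zeta)$ as $\zeta$ ranges over the $(2p)$-th roots of unity, and then clear denominators by passing to the honest integer polynomial $\Psi(x) = x^M Q(x) = \sum_{s\in S}(x^{M+s} + x^{M-s})$ of degree $2M$, which has the same nonzero roots as $Q$. The first key observation is that $\zeta = -1$ always yields a zero eigenvalue: since $S$ contains $t$ odd and $t$ even integers, $Q(-1) = 2\sum_{s\in S}(-1)^s = 0$, and the corresponding eigenvector $[1\ {-1}\ 1\ \cdots]^\intercal$ has no zero entry. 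Because $Q(\zeta) = Q(\bar\zeta)$, any further zero at $\zeta \neq \pm 1$ would force nullity at least $2$, while $Q(1) = 4t \neq 0$; thus $\Circ(2p,S)$ is a nut graph precisely when no primitive $p$-th or $2p$-th root of unity is a root of $\Psi$. Such a root would force $\Phi_p \mid \Psi$ or $\Phi_{2p} \mid \Psi$ by irreducibility, and since $\deg \Phi_p = \deg \Phi_{2p} = p - 1$, this is impossible as soon as $p - 1 > 2M$ (here I use that, for $p$ a large odd prime, the $(2p)$-th roots of unity consist of $\pm 1$ together with the primitive $p$-th and $2p$-th roots).

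It remains to exclude the forbidden collisions, and here I would sort the circulant eigenvalues by the order of the associated root of unity. The eigenvalue $4t$ (at $\zeta = 1$) can equal $-\lambda$ for some $\lambda \in \sigma(G)$ only if $-4t \in \sigma(G)$; but $G$ is $d$-regular, so $\sigma(G) \subseteq [-d, d]$, and the hypothesis $d < 4t$ gives $-4t < -d$, excluding this. This is the single point at which $d < 4t$ is used. The eigenvalue $0$ (at $\zeta = -1$) is irrelevant, since Proposition~\ref{prop:theProp} concerns only nonzero $\lambda$. For the remaining eigenvalues $Q(\zeta)$ with $\zeta$ a primitive $p$-th or $2p$-th root of unity, a collision $Q(\zeta) = -\lambda$ means $\zeta$ is a root of $\Psi_\lambda(x) := \Psi(x) + \lambda x^M \in \mathbb{Q}(\lambda)[x]$, a nonzero polynomial of degree $2M$ (its $x^{2M}$ coefficient is unaffected by $\lambda$); recalling that every eigenvalue of $G$ is algebraic of degree at most $n_G$, I would then force a contradiction via $p - 1 = [\mathbb{Q}(\zeta):\mathbb{Q}] \le [\mathbb{Q}(\lambda,\zeta):\mathbb{Q}] = [\mathbb{Q}(\lambda,\zeta):\mathbb{Q}(\lambda)]\,[\mathbb{Q}(\lambda):\mathbb{Q}] \le 2M\,n_G$, which fails once $p - 1 > 2M\,n_G$.

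The main obstacle is precisely this last step: a high-degree root of unity $\zeta$ can nonetheless produce a low-degree, even rational, eigenvalue $Q(\zeta)$, so one cannot simply argue that $-\lambda$ is too algebraically simple to be hit. Introducing the auxiliary polynomial $\Psi_\lambda$ over the number field $\mathbb{Q}(\lambda)$ is what converts the collision into a genuine field-degree contradiction, playing the role that a cyclotomic-factor search would play in the style of \cite{FilasetaSchinzel2004}. Finally, choosing $\ell$ to exceed $2M\,n_G + 1$ (which also dominates $M$ and $2M+1$) makes all three tasks succeed simultaneously for every prime $p \ge \ell$, yielding the asserted family of $(d+4t)$-regular nut graphs.
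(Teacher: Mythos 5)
Your proof is correct, and it takes a genuinely different route at the decisive step. The paper does not invoke Proposition~\ref{prop:theProp} at all: it works directly with the product eigenvalues $\nu(\lambda,\zeta)$, converts each possible vanishing into a root-of-unity root of the integer polynomial $P_\lambda(x) = \lambda x^{\alpha} + \sum_{j}\bigl(x^{\alpha+s_j}+x^{\alpha-s_j}\bigr)$ with $\alpha = \max S$, observes that over the finitely many $\lambda \in \sigma(G)$ only finitely many roots of unity can arise as roots, and sets $\ell = 1 + \max(\alpha,\beta)$, where $\beta$ is the largest order among those roots of unity --- a pure finiteness argument in which $\beta$ is left implicit. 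You instead factor the claim through Proposition~\ref{prop:theProp}, which obliges you to verify separately that $\Circ(2p,S)$ is itself a nut graph (the paper's route absorbs this into the case $\lambda = 0$ of the same analysis), and you replace the finiteness argument by a field-degree bound: a collision $Q(\zeta) = -\lambda$ makes $\zeta$ a root of the nonzero monic degree-$2M$ polynomial $\Psi_\lambda \in \mathbb{Q}(\lambda)[x]$, whence $p-1 = [\mathbb{Q}(\zeta):\mathbb{Q}] \le [\mathbb{Q}(\lambda,\zeta):\mathbb{Q}] \le 2M\,n_G$, which fails for large primes. The shared ingredients --- the values $\nu(\lambda,\pm 1)$, the fullness of the kernel eigenvector via Lemmas~\ref{circ_lemma} and~\ref{cart_lemma}, and the single use of $d < 4t$ to dispose of $\zeta = 1$ --- coincide with the paper's, and your tower-of-fields step is airtight (the leading coefficient of $\Psi_\lambda$ is indeed $1$ since $M \ge 1$, and $\varphi(p) = \varphi(2p) = p-1$ for odd $p$). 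What your route buys is notable: $\ell = 2M\,n_G + 2$ is explicit, so your version of the lemma is effective, whereas the paper's concluding remarks concede that $\beta$ (and hence $\ell$) \lq\lq is not easy to determine\rq\rq{} and that their proof of Theorem~\ref{thm:main} is consequently non-constructive. The price is numerical quality: the paper's tailored computation in Example~\ref{example_e} yields $\ell = 19$, while your bound there gives $\ell = 2\cdot 10\cdot 10 + 2 = 202$, so the paper's $\ell$ is typically far smaller when it can be computed, but yours is always available in closed form.
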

\begin{proof}
    Let $S = \{s_1, s_2, \ldots, s_{2t} \}$ and suppose that $n \in \mathbb{N}$ is even. 
For every $n \in \mathbb{N}, \, n \ge 2 \max S + 2$, let $H_n = \Circ(n, S)$.
By  Lemmas \ref{circ_lemma} and \ref{cart_lemma},  the eigenvalues of $G \cart H_n$ are $\nu(\lambda, \zeta)$, where
    \[
        \nu(z, x) = z + \sum_{j = 1}^{2t} \left( x^{s_j} + x^{-s_j} \right) , 
    \]
    and $\lambda$ ranges over $\sigma(G)$, while $\zeta$ ranges over the $n$-th roots of unity. Furthermore, we have
    \[
        \nu(\lambda, 1) = \lambda + 4t \quad \mbox{and} \quad \nu(\lambda, -1) = \lambda .
    \]
From the Perron-Frobenius theorem (see, e.g., \cite[Section 2.2]{haemers}) it follows that $|\lambda| \leq d$ for $\lambda \in \sigma(G)$, 
hence $\nu(\lambda, 1) > 0$. As the graph $G$ is a nut graph, it follows that $\nu(\lambda, -1) = 0$ is satisfied only for the simple eigenvalue $\lambda = 0$ of $G$. 
Since Lemmas \ref{circ_lemma} and \ref{cart_lemma} guarantee that the corresponding eigenvector is full, it follows that $G \cart H_n$ is a nut graph if and 
only if $\nu(\lambda, \zeta) \neq 0$ for every $\lambda \in \sigma(G)$ and every non-real $n$-th root of unity 
$\zeta \in \mathbb{C}$, $\zeta \notin \{ 1, -1 \}$.

Now, let $\alpha = \max S$. For each $\lambda \in \sigma(G)$, we see that $\nu(\lambda, \zeta) = 0$ holds if and only if $\zeta$ is a root of the $\mathbb{Z}[x]$-polynomial
    \[
        P_\lambda(x) = \lambda x^\alpha + \sum_{j = 1}^{2t} \left( x^{\alpha + s_j} + x^{\alpha - s_j} \right) .
    \]
    Furthermore, there are finitely many roots of unity that represent a root of $P_\lambda(x)$ for 
some $\lambda \in \sigma(G)$, 
and so we may take $\beta \in \mathbb{N}$  
to be the greatest of all of their orders. 
Note that $\beta$ is well-defined, as $-1$ certainly is a root of $P_0(x)$. 
If we let $\ell = 1 + \max(\alpha, \beta)$, then for each prime number $p \ge \ell$, every $2p$-th root of unity apart from $1$ and $-1$ has an order of 
either $p$ or $2p$, which means that it cannot be a root of 
any polynomial $P_\lambda(x), \, \lambda \in \sigma(G)$. 
Therefore, $G \cart H_{2p}$ is a nut graph for any $p \ge \ell$, and it is clear that the given graph is $(d + 4t)$-regular.
\end{proof}

To illustrate the steps in the above proof, we provide an example.

\begin{example}\label{example_e}
We show how to find examples of $17$-regular nut graphs using Lemma~\ref{main_lemma}. 
The values $d = 5$ and $t = 3$ fulfil the requirements of the lemma and 
will lead us to nut graphs of the desired degree. 
We require a $5$-regular nut graph. 
There are nine such graphs of order $10$ and four of order~$12$ (see, e.g., \cite{hog, hog2,CoolFowlGoed-2017,Jan2020,GPS}) and we could
choose any one of them. For example, let $G$ be the graph in Figure~\ref{fig:example5r}.
In the House of Graphs \cite{hog,hog2}, graph $G$ is listed last of the $5$-regular nut graphs on $10$ vertices. 

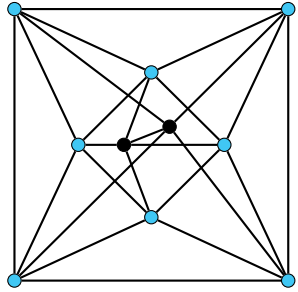
\begin{figure}[!h]
\centering
\begin{tikzpicture}[scale=1.2]
\tikzstyle{vertex}=[draw,circle,minimum size=5pt,inner sep=1pt,fill=cyan!60!white]
\tikzstyle{edge}=[draw,thick]
\node[vertex] (v7) at (0, 0) {};
\node[vertex] (v6) at (3, 0) {};
\node[vertex] (v5) at (0, -3) {};
\node[vertex] (v4) at (3, -3) {};
\node[vertex] (v3) at (1.5, -0.7) {};
\node[vertex] (v0) at (1.5, -2.3) {};
\node[vertex] (v2) at (0.7, -1.5) {};
\node[vertex] (v1) at (2.3, -1.5) {};
\node[vertex,fill=black] (v8) at (1.7, -1.3) {};
\node[vertex,fill=black] (v9) at (1.2, -1.5) {};
\path[edge] (v0) -- (v9) -- (v1);
\path[edge] (v2) -- (v9) -- (v3);
\path[edge] (v4) -- (v8) -- (v6);
\path[edge] (v5) -- (v8) -- (v7);
\path[edge] (v5) -- (v4) -- (v6) -- (v7) -- (v5);
\path[edge] (v0) -- (v1) -- (v3) -- (v2) -- (v0);
\path[edge] (v7) -- (v2) -- (v5) -- (v0) -- (v4) -- (v1) -- (v6) -- (v3) -- (v7);
\path[edge] (v9) -- (v8);
\end{tikzpicture}
\caption{The graph $G$ used in Example \ref{example_e}. Note that we obtain a square antiprism after deleting the two black vertices.}
\label{fig:example5r}
\end{figure}

The spectrum of $G$ is $\sigma(G) = \{ 5,(\sqrt{17}-1)/2, \sqrt{2}, \sqrt{2}, 0, -\sqrt{2}, -\sqrt{2}, -2, -2, -(\sqrt{17}+1)/2
\}$. 
Now we must choose a suitable set $S$. Any choice for which $S$ contains
$3$ odd and $3$ even integers is valid.
Let us take, for example, $S = \{1, 2, 3, 6, 7, 10\}$.
The corresponding value of 
the parameter $\alpha$ defined in the
proof is $\alpha = 10$, and the polynomial $P_\lambda(x)$ is then
\[
x^{20} + x^{17} + x^{16} + x^{13} + x^{12} + x^{11} + \lambda x^{10} + x^9 + x^8 + x^7 + x^4 + x^3 + 1.
\]
Let $\omega_n = e^{2\pi i / n}$ be a primitive $n$-th root of unity. By using a computer algebra system
such as \texttt{SageMath} \cite{SageMath}, we can verify that roots of unity appear as roots of the polynomial 
$P_\lambda(x)$ for certain values of $\lambda \in \sigma(G)$.
For $\lambda = \sqrt{2}$, such roots are $\omega_{8}^{3}$ and $\omega_{8}^{5}$, both of order~$8$;
for $\lambda = -\sqrt{2}$, they are $\omega_{8}$ and $\omega_{8}^{7}$, also both of order~$8$.
Finally, for $\lambda = 0$, the roots of unity that are roots of the polynomial $P_\lambda(x)$ are
\[
\omega_{2} = -1,\  \omega_{3},\  \omega_{3}^{2},\  \omega_{6},\  \omega_{6}^{5},\  \omega_{12},\  \omega_{12}^{5},\  \omega_{12}^{7},\  \omega_{12}^{11},\  
\omega_{18},\  \omega_{18}^{5},\  \omega_{18}^{7},\  \omega_{18}^{11},\   \omega_{18}^{13} \text{ and } \omega_{18}^{17}.
\]
The
orders of the roots in this list
are all between $2$ and $18$,
leading to $\beta = 18$. Hence, $\ell = 1 + \max(10, 18) = 19$.
For every prime number $p \geq 19$, the graph $G \cart \Circ(2p, \{1, 2, 3, 6, 7, 10\})$ is a $17$-regular nut graph. In particular,
$G \cart \Circ(38, \{1, 2, 3, 6, 7, 10\})$ is a $17$-regular nut graph of order~$380$. 

~\hfill $\Diamond$
\end{example}

Note that we have extensive freedom in choosing suitable $(d, t)$ pairs, choosing the graph $G$ and choosing the set $S$. 
In fact, in our example, as we were looking for a $17$-regular nut graph, the only possible choice for $(d, t)$ was $(5, 3)$. 
However, if we 
had been seeking, say, a $37$-regular nut graph, then we could have picked any $(d, t) \in \{(5, 8), (9, 7), (13, 6), (17, 5)\}$. 
There 
are 
always
infinitely many choices for the graph $G$. For the example, we chose the graph with the highest symmetry among those
 of the smallest order. Finally, there are  infinitely many feasible candidates for the set $S$. Needless to say, the value of $\ell$ depends on 
all of these
choices.

Note also that Lemma~\ref{main_lemma} gives us an infinite family of $(d+4t)$-regular nut graphs since we may pick any prime number $p \geq \ell$. However, it would be sufficient
to provide only one such graph, since a
construction described in \cite{GPS}, which 
replaces a vertex of degree $d$
of a nut graph by a gadget of $2d+1$ vertices of degree $d$ 
while retaining the nut property,
could have been used iteratively to produce an infinite family. 

With the aid of Lemma~\ref{main_lemma}, 
we can now prove Theorem~\ref{thm:main}.

\begin{proof}[Proof of Theorem~\ref{thm:main}]
Theorems \ref{reg_base_th_1}, \ref{reg_base_th_2}, \ref{reg_base_th_3} and \ref{circ_base_th} show that there exist infinitely many $d$-regular nut graphs for each $d \ge 3$ such that $d \le 12$ or $d \equiv 0 \pmod 4$. Thus, it is sufficient to show the existence of infinitely many $d$-regular
 nut graphs for every $d \ge 13$ such that $d \not\equiv 0 \pmod 4$.
    
Let $G_3$, $G_5$, and $G_6$ be 
arbitrary $3$-regular, $5$-regular, and $6$-regular nut graphs, respectively. 
Such graphs exist in abundance (see, e.g., \cite{Jan2020}). 
Let $S$ be any set consisting of $t$ odd and $t$ even positive integers, $t \ge 2$. Furthermore, for every $n \ge 2 \max S + 2$, let $H_n = \Circ(n, S)$. From Lemma~\ref{main_lemma}, we 
obtain immediately that each of $G_3 \cart H_n, G_5 \cart H_n$, and $G_6 \cart H_n$ is a nut graph for 
infinitely many values of $n \in \mathbb{N}$. Therefore, there exist infinitely many $(4t + 3)$-regular, $(4t + 5)$-regular and $(4t + 6)$-regular nut graphs, for any $t \ge 2$.
\end{proof}

\section{Degrees of Cayley nut graphs}

In the present section we prove Theorem \ref{second_th}. We begin with the following lemma.

\begin{lemma}\label{caux_lemma}
Let $\psi \in \mathbb{C}$ be a root of unity of order $b$ such that $b \ge 3$ and $b \not\equiv 0 \pmod 4$. Then $\psi$ is not a root of either of 
the polynomials
\[
    P_1(x) \coloneqq 2 x^{4t + 3} + x^{2t + 8} - x^{2t} - 2 x^5 \quad \mbox{and} \quad P_2(x) \coloneqq 2 x^{4t + 3} - x^{2t + 8} + x^{2t} - 2 x^5
\]
for any $t \in \mathbb{N}$.
\end{lemma}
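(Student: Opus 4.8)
The plan is to prove the contrapositive: assume $\psi$ is a primitive $b$-th root of unity with $b \geq 3$ and $b \not\equiv 0 \pmod 4$, and show that $P_1(\psi) \neq 0$ and $P_2(\psi) \neq 0$ for every $t \in \mathbb{N}$. Since the lemma header advertises that the new proof technique relies on trigonometric identities rather than the Filaseta--Schinzel cyclotomic-factor algorithm, I expect the cleanest route is to write $\psi = e^{i\theta}$ with $\theta = 2\pi k / b$ for some $k$ coprime to $b$, factor out a common power of $\psi$, and reduce $P_1(\psi) = 0$ and $P_2(\psi) = 0$ to statements about whether certain real trigonometric sums can vanish.

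Let me look at the structure of the polynomials. Writing $P_1(x) = x^5\bigl(2x^{4t-2} + x^{2t+3} - x^{2t-5} - 2\bigr)$ is awkward because of the exponents, so instead I would look for a symmetry that pairs terms. The key observation I would exploit is that the four exponents $4t+3,\ 2t+8,\ 2t,\ 5$ pair up nicely: $(4t+3)+5 = (2t+8)+(2t) = 4t+8$. This means $P_1$ and $P_2$ are, up to a factor, \emph{antipalindromic/palindromic} about the exponent $2t+4$. Concretely, I would divide by $\psi^{2t+4}$ and set $m = 2t - 1$, so that the four exponents become symmetric as $\pm(2t+4-5) = \pm(2t-1)$ and $\pm(2t+4 - 2t) = \pm 4$ after the appropriate pairing; then $P_1(\psi)/\psi^{2t+4}$ and $P_2(\psi)/\psi^{2t+4}$ become expressions of the form $4i\sin((2t-1)\theta) \pm 2i\sin(4\theta)$ or $4\cos \pm 2\cos$ type combinations (the exact sine/cosine split depends on the relative signs in $P_1$ versus $P_2$). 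The upshot is that $P_1(\psi)=0$ and $P_2(\psi)=0$ each reduce to a purely real equation of the form $2\sin(a\theta) = \pm\sin(b\theta)$ or $2\cos(a\theta)=\pm\cos(b\theta)$ for explicit small $a, b$.

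Once the vanishing condition is in trigonometric form, I would derive a contradiction from the hypothesis $b \not\equiv 0 \pmod 4$. The reason the hypothesis enters is that $\theta = 2\pi k/b$ forces the relevant arguments to be rational multiples of $\pi$ whose denominators are controlled by $b$; the condition $b \not\equiv 0 \pmod 4$ prevents the argument from landing on the specific special angles (those involving a factor of $4$ in the denominator, e.g.\ odd multiples of $\pi/4$) at which an identity like $2\sin(a\theta) = \sin(b\theta)$ could hold. I would handle the finitely many genuinely small cases ($b = 3$, and other small residues) by direct inspection, and dispatch the general case by a Niven-type rationality argument: the quantity $2\sin(a\theta)/\sin(b\theta)$ (or its cosine analogue) being equal to $\pm 1$ together with $\theta$ a rational multiple of $\pi$ severely constrains $b$, and $b \not\equiv 0 \pmod 4$ rules out the surviving possibilities.

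\textbf{The main obstacle} I anticipate is tracking the exponent bookkeeping so that $P_1$ and $P_2$ collapse cleanly into sine-only and cosine-only combinations — getting the centering power and the sign conventions exactly right is where a careless slip would wreck the argument, since the whole point is that the imaginary or real parts separate. A secondary difficulty is that the reduced trigonometric identity must be shown to fail for \emph{all} $t \in \mathbb{N}$ simultaneously and for all admissible $b$; I would guard against this by arguing at the level of the angle $\theta$ (fixed by $b$) and showing that \emph{no} integer $t$ can make the two sides coincide, rather than case-splitting on $t$. I expect the congruence restriction $b \not\equiv 0 \pmod 4$ to be precisely the condition that closes off the last family of would-be solutions, which is why it appears in the hypothesis exactly as stated.
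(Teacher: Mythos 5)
Your opening reduction is essentially the paper's first step, and it is correct: the exponents of $P_1$ and $P_2$ pair up as $(4t+3)+5 = (2t+8)+2t = 4t+8$, so dividing by $x^{2t+4}$ centres the polynomials and the vanishing conditions become the sine-only equations $2\sin\bigl((2t-1)\theta\bigr) = \mp\sin(8\pi/b)$ (both $P_1$ and $P_2$ collapse to difference combinations, i.e.\ pure sines with opposite signs — your hedge about a possible cosine split is unnecessary). However, you keep $\theta = 2\pi k/b$ with an arbitrary $k$ coprime to $b$ and plan to argue "at the level of the angle $\theta$". The paper instead first invokes irreducibility of $\Phi_b$: if $\psi$ is a root of $P_i$ then $\Phi_b \mid P_i$, so the \emph{canonical} root $\omega = e^{2\pi i/b}$ is also a root, and one may take $k=1$. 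This Galois step is load-bearing, because everything that follows rests on $4\pi/b$ lying in $(0,\pi/2)$ for $b \ge 9$ and on monotonicity in $b$ of quantities like $\cos\tfrac{2\pi}{b}$; none of that survives for general $k$.

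The more serious gap is your endgame, which as proposed would fail. The paper's mechanism is a \emph{size} argument: writing $\sin\tfrac{8\pi}{b} = 2\sin\tfrac{4\pi}{b}\cos\tfrac{4\pi}{b}$ gives, for $b \ge 9$ (the cases $3 \le b \le 8$ being a finite check over residues), the strict bound $\bigl|\sin\tfrac{2(2t-1)\pi}{b}\bigr| < \sin\tfrac{4\pi}{b}$, which — crucially using that $2t-1$ is odd — confines $(2t-1) \bmod b$ to a short explicit list of residues near $0$ and $b/2$. This is exactly how the $t$-dependence is eliminated, and it is also exactly where $b \not\equiv 0 \pmod 4$ enters: when $b$ is even, $b/2$ is odd, so the only odd residue near $b/2$ is $b/2$ itself, forcing $\sin\tfrac{2(2t-1)\pi}{b} = 0$, a contradiction. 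Each surviving residue then yields a $t$-free equation in $b$ alone, namely $2\cos\tfrac{2\pi}{b}\cos\tfrac{4\pi}{b} = 1$, $4\cos\tfrac{\pi}{b}\cos\tfrac{2\pi}{b}\cos\tfrac{4\pi}{b} = 1$, or $2\cos\tfrac{2\pi}{b}\bigl(2\cos\tfrac{\pi}{b}\cos\tfrac{4\pi}{b} - 1\bigr) = 1$, each dispatched by monotonicity in $b$ together with small-case evaluations. Your proposed substitute — a "Niven-type rationality argument" applied to $2\sin(a\theta)/\sin(b\theta) = \pm 1$ — has no traction here: the equation is not of that ratio form, and neither side is rational in general, so Niven's theorem says nothing; likewise your suggestion that $b \not\equiv 0 \pmod 4$ excludes angles like odd multiples of $\pi/4$ misidentifies the role of the hypothesis, which acts through the parity bookkeeping of residues just described. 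Declaring that one should "show no integer $t$ can make the two sides coincide" restates the goal rather than supplying a method; the missing idea is the inequality that pins $(2t-1)\bmod b$ to finitely many values.
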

\begin{proof}
By way of contradiction, suppose that $\psi$ is a root of $P_1(x)$ or $P_2(x)$ for some $t \in \mathbb{N}$. In this case, we have $\Phi_b(x) \mid P_1(x)$ or $\Phi_b(x) \mid P_2(x)$, hence $\omega \coloneqq e^{2 \pi i / b}$ is a root of $P_1(x)$ or $P_2(x)$. Therefore,
\begin{equation}\label{caux_3}
    2 \omega^{2t - 1} - 2 \omega^{-(2t - 1)} = \omega^{-4} - \omega^4 \quad \mbox{or} \quad     2 \omega^{2t - 1} - 2 \omega^{-(2t - 1)} = \omega^{4} - \omega^{-4} .
\end{equation}
By applying the identity $\frac{e^{i \theta} - e^{- i \theta}}{2i} = \sin \theta$ to \eqref{caux_3}, we obtain
\[
    2 \sin\tfrac{2(2t - 1) \pi}{b} = - \sin \tfrac{8 \pi}{b} \quad \text{or} \quad 2 \sin \tfrac{2(2t - 1) \pi}{b} = \sin \tfrac{8 \pi}{b} .
\]
Since $\sin\frac{8 \pi}{b} = 2 \sin \frac{4 \pi}{b} \cos \frac{4 \pi}{b}$, it follows that
\begin{equation}\label{caux_4}
    \left| \sin\tfrac{2(2t - 1) \pi}{b} \right| = \left| \sin \tfrac{4 \pi}{b} \right| \cdot \left| \cos \tfrac{4 \pi}{b} \right| .
\end{equation}
It is easy to verify, e.g., by using \texttt{SageMath}, that \eqref{caux_4} does not hold for any $b \in \{3, 4, \ldots, 8\}$ regardless of the value of $(2t - 1) \bmod b$. Therefore, $b \ge 9$. With this in mind, we have $\frac{4\pi}{b} \in (0, \tfrac{\pi}{2})$, which implies $\sin \frac{4 \pi}{b}, \cos \frac{4 \pi}{b} \in (0, 1)$. Hence, \eqref{caux_4} gives
\begin{equation}\label{caux_5}
    \left| \sin\tfrac{2(2t - 1) \pi}{b} \right| < \left| \sin \tfrac{4 \pi}{b} \right| .
\end{equation}
We proceed by splitting the argument into two cases depending on the parity of $b$.

\medskip\noindent
\textbf{Case 1:} $b$ is even.

Observe that if
\[
    (2t - 1) \bmod b \in \{ 3, 5, \ldots, \tfrac{b}{2} - 2\} \quad \mbox{or} \quad (2t - 1) \bmod b \in \{ \tfrac{b}{2} + 2, \tfrac{b}{2} + 4, \ldots, b - 3\},
\]
then $\left| \sin \frac{2(2t - 1) \pi}{b} \right| \ge \left| \sin \frac{4 \pi}{b} \right|$, which contradicts \eqref{caux_5}. Also, if $(2t - 1) \bmod b = \frac{b}{2}$, then $\sin \frac{2(2t - 1) \pi}{b} = 0$, which is impossible because $\sin \frac{4 \pi}{b}, \cos \frac{4 \pi}{b} \in (0, 1)$. Therefore, $(2t - 1) \bmod b \in \{1, b - 1\}$. With this in mind, \eqref{caux_4} becomes
\[
    \sin \tfrac{2 \pi}{b} = \sin \tfrac{4 \pi}{b} \cos \tfrac{4 \pi}{b}, \quad \text{i.e.} \quad 2 \cos \tfrac{2 \pi}{b} \cos \tfrac{4 \pi}{b} = 1.
\]
If $b = 10$, then
\[
    2 \cos \tfrac{2 \pi}{b} \cos \tfrac{4 \pi}{b} = 2 \cos\tfrac{\pi}{5} \cos \tfrac{2 \pi}{5} = \frac{\sin \frac{4 \pi}{5}}{2 \sin \frac{\pi}{5}} = \tfrac{1}{2},
\]
while for $b \ge 14$, we get
\[
    2 \cos \tfrac{2 \pi}{b} \cos \tfrac{4 \pi}{b} \ge 2 \cos \tfrac{\pi}{7} \cos \tfrac{2 \pi}{7} > 1,
\]
yielding a contradiction either way.

\medskip\noindent
\textbf{Case 2:} $b$ is odd.

Similarly, we observe that if
\[
    (2t - 1) \bmod b \in \{ 2, 3, 4, \ldots, \tfrac{b - 5}{2}\} \quad \mbox{or} \quad (2t - 1) \bmod b \in \{ \tfrac{b + 5}{2}, \tfrac{b + 7}{2}, \ldots, b - 2\},
\]
then $\left| \sin \frac{2(2t - 1) \pi}{b} \right| \ge \left| \sin \frac{4 \pi}{b} \right|$, which contradicts \eqref{caux_5}. Furthermore, if $(2t - 1) \bmod b = 0$, then $\sin \frac{2(2t - 1) \pi}{b} = 0$, which is not possible. Therefore, $(2t - 1) \bmod b \in \{1, \frac{b - 3}{2}, \frac{b - 1}{2}, \frac{b + 1}{2}, \frac{b + 3}{2}, b - 1\}$. If $(2t - 1) \bmod b \in \{1, b - 1\}$, then the proof can be completed analogously to Case 1.

Now, suppose that $(2t - 1) \bmod b \in \{ \frac{b - 1}{2}, \frac{b + 1}{2} \}$. In this case, \eqref{caux_4} transforms to
\begin{equation}\label{caux_6}
    \sin \tfrac{\pi}{b} = \sin \tfrac{4 \pi}{b} \cos \tfrac{4 \pi}{b} .
\end{equation}
By applying the identity $\sin 4 \theta = 4 \sin \theta \cos \theta \cos 2 \theta$ to \eqref{caux_6}, we get
\[
    4 \cos \tfrac{\pi}{b} \cos \tfrac{2 \pi}{b} \cos \tfrac{4 \pi}{b} = 1 .
\]
If $b = 9$, then
\[
    4 \cos \tfrac{\pi}{b} \cos \tfrac{2 \pi}{b} \cos \tfrac{4 \pi}{b} = 4 \cos \tfrac{\pi}{9} \cos \tfrac{2 \pi}{9} \cos \tfrac{4 \pi}{9} = \frac{\sin \frac{8\pi}{9}}{2 \sin \frac{\pi}{9}} = \tfrac{1}{2},
\]
while for $b \ge 11$, we have
\[
    4 \cos \tfrac{\pi}{b} \cos \tfrac{2 \pi}{b} \cos \tfrac{4 \pi}{b} \ge 4 \cos \tfrac{\pi}{11} \cos \tfrac{2 \pi}{11} \cos \tfrac{4 \pi}{11} > 1,
\]
which yields a contradiction either way.

Finally, suppose that $(2t - 1) \bmod b \in \{ \frac{b - 3}{2}, \frac{b + 3}{2} \}$. In this case, \eqref{caux_4} becomes
\begin{equation}\label{caux_7}
    \sin \tfrac{3 \pi}{b} = \sin \tfrac{4 \pi}{b} \cos \tfrac{4 \pi}{b}.
\end{equation}
By plugging the identities $\sin 3 \theta = \sin \theta \left( 2 \cos 2 \theta + 1 \right)$ and $\sin 4 \theta = 4 \sin \theta \cos \theta \cos 2 \theta$ into \eqref{caux_7}, we get
\[
    2 \cos \tfrac{2\pi}{b} + 1 = 4 \cos \tfrac{\pi}{b} \cos \tfrac{2\pi}{b} \cos \tfrac{4\pi}{b}, \quad \text{i.e.} \quad 2 \cos \tfrac{2\pi}{b} \left( 2 \cos \tfrac{\pi}{b} \cos \tfrac{4\pi}{b} - 1 \right) = 1 .
\]
It is easy to verify, e.g., by using \texttt{SageMath}, that
\[
    2 \cos \tfrac{2\pi}{b} \left( 2 \cos \tfrac{\pi}{b} \cos \tfrac{4\pi}{b} - 1 \right) < 1
\]
holds when $b \le 17$. Since $2 \cos \frac{\pi}{19} \cos \frac{4\pi}{19} > 1$, this means that for any $b \ge 19$, we have
\[
     2 \cos \tfrac{2\pi}{b} \left( 2 \cos \tfrac{\pi}{b} \cos \tfrac{4\pi}{b} - 1 \right) \ge 2 \cos \tfrac{2\pi}{19} \left( 2 \cos \tfrac{\pi}{19} \cos \tfrac{4\pi}{19} - 1 \right) > 1 ,
\]
yielding a contradiction.
\end{proof}

We are now in a position to complete the proof of Theorem \ref{second_th} by relying on the next lemma.

\begin{lemma}\label{cayley_lemma}
    For any $t \in \mathbb{N}$ and $m \ge 4t + 6$ such that $m \equiv 2 \pmod{4}$, the graph
    \[
        \Circ(m, \{1, 2, \ldots, t - 1\} \cup \{ \tfrac{m + 2}{4}, \tfrac{m + 6}{4} \} \cup \{\tfrac{m}{2} - (t - 1), \ldots, \tfrac{m}{2} - 2, \tfrac{m}{2} - 1\} \cup \{ \tfrac{m}{2} \}) \cart K_2
    \]
    is a $(4t + 2)$-regular Cayley nut graph of order $2m$.
\end{lemma}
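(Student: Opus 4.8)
The plan is to diagonalise the graph with Lemmas~\ref{circ_lemma} and~\ref{cart_lemma}, reduce the nut property to an arithmetic statement about roots of unity, and feed the decisive case into Lemma~\ref{caux_lemma}.

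\emph{Structure and spectrum.} Put $r=\tfrac{m-2}{4}$, so that $\tfrac{m+2}{4}=r+1$, $\tfrac{m+6}{4}=r+2$ and $\tfrac m2=2r+1$ is odd. The set $S$ has $2t$ elements different from $\tfrac m2$, each contributing $2$ to the valency, plus the self-paired element $\tfrac m2$ contributing $1$; hence $\Circ(m,S)$ is $(4t+1)$-regular and the product is $(4t+2)$-regular on $2m$ vertices. Since a cartesian product of Cayley graphs is a Cayley graph on the direct product of the groups, and $\Circ(m,S)\cong\Cay(\mathbb Z_m,S\cup -S)$, $K_2\cong\Cay(\mathbb Z_2,\{1\})$, the graph is Cayley. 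By Lemmas~\ref{circ_lemma} and~\ref{cart_lemma} its eigenvalues are the numbers $P(\zeta)\pm 1$ with $\zeta$ ranging over the $m$-th roots of unity, where
\[
    P(x)=\sum_{s\in S\setminus\{m/2\}}\bigl(x^{s}+x^{-s}\bigr)+x^{m/2},
\]
and the eigenvectors are full tensors of a circulant eigenvector with $(1,\pm1)^{\intercal}$. Thus the graph is a nut graph exactly when $0$ occurs once among the $P(\zeta)\pm1$; equivalently, counting multiplicities, when $-1$ is a simple eigenvalue of $\Circ(m,S)$ and $+1$ is not an eigenvalue at all.

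\emph{The two real roots.} One computes $P(1)=4t+1$, and $P(-1)=-1$: here $(-1)^{m/2}=-1$, the blocks $\{1,\dots,t-1\}$ and $\{\tfrac m2-(t-1),\dots,\tfrac m2-1\}$ cancel, as do $r+1$ and $r+2$. The zero eigenvalue produced by $\zeta=-1$ paired with the $K_2$-value $+1$ has the explicit $\pm1$-valued, hence full, eigenvector $\bigl((-1)^{j}\bigr)_{j}\otimes(1,1)^{\intercal}$. So it remains to show $P(\zeta)\neq\pm1$ for every $m$-th root of unity $\zeta\notin\{1,-1\}$.

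\emph{The decisive case via Lemma~\ref{caux_lemma}.} Split according to the value of $\zeta^{m/2}$. If $\zeta^{m/2}=1$ (so $\zeta$ has odd order), write $\zeta=\eta^{2}$ with $\eta=\omega^{k/2}$ an $m$-th root of unity, note $\eta^{2r+1}=\pm1$, and collapse the symmetric sums using $\sum_{j=-(t-1)}^{t-1}\eta^{2j}=\frac{\eta^{2t-1}-\eta^{-(2t-1)}}{\eta-\eta^{-1}}$. After clearing denominators the equation $P(\zeta)=-1$ reduces to $P_1(\eta)=0$ or $P_2(\eta)=0$. As $\eta$ is an $m$-th root of unity with $m\equiv2\pmod4$, its order $b$ divides $m$, so $b\not\equiv0\pmod4$, and $b\ge3$ since $\eta\neq\pm1$ (because $\zeta\neq1$); Lemma~\ref{caux_lemma} then gives a contradiction. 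If instead $\zeta^{m/2}=-1$ and $\zeta\neq-1$, the two outer blocks cancel and $P(\zeta)=-2\cos r\theta-2\cos(r-1)\theta-1$ with $\theta=\arg\zeta$; then $P(\zeta)=-1$ forces $\cos\tfrac{(2r-1)\theta}{2}\cos\tfrac\theta2=0$, and since $\cos\tfrac\theta2\neq0$ a short divisibility computation on the order of $\zeta$ (using $\gcd(2r-1,2r+1)=1$) shows $\cos\tfrac{(2r-1)\theta}{2}=0$ is impossible for such $\zeta$.

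\emph{The expected obstacle: excluding $+1$.} Repeating both reductions for $P(\zeta)=1$ yields identities of the same shape but with one extra low-order term: for $\zeta^{m/2}=1$ one arrives at $2\sin((2t-1)\psi)-2\sin\psi=\pm\sin4\psi$ (rather than the clean $2\sin((2t-1)\psi)=\pm\sin4\psi$ settled by Lemma~\ref{caux_lemma}), and for $\zeta^{m/2}=-1$ one is reduced to $\sin\beta$ being a root of $4u^{3}-4u\pm1$. Neither follows from Lemma~\ref{caux_lemma} verbatim, so I would dispatch them by the method of its proof: rewrite each as an equality of the form $\lvert\sin(\cdot)\rvert=\lvert\sin(\cdot)\rvert\,\lvert\cos(\cdot)\rvert$, verify the finitely many small orders by a direct computation, and close the remaining orders with monotonicity and magnitude bounds on products of cosines. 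This $P(\zeta)=1$ analysis is the real obstacle, since the tidy cyclotomic input of Lemma~\ref{caux_lemma} no longer applies unchanged and the bounding argument must be redone for the perturbed equations.
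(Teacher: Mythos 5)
Your structural reduction coincides with the paper's: the same diagonalisation via Lemmas~\ref{circ_lemma} and~\ref{cart_lemma}, the same identification of the simple zero at $(\zeta,\lambda)=(-1,+1)$ with its full eigenvector, and, in the decisive subcase $P(\zeta)=-1$ with $\zeta^{m/2}=1$, the same substitution $\zeta=\eta^{2}$ (legitimate since $\zeta=\omega^{k}$ with $k$ even) leading to $P_1(\eta)=0$ or $P_2(\eta)=0$ and hence to Lemma~\ref{caux_lemma}; your order bookkeeping ($b \mid m$, $b\ge 3$, $b\not\equiv 0\pmod 4$) is also the paper's. Your trigonometric treatment of $P(\zeta)=-1$ when $\zeta^{m/2}=-1$ is a correct equivalent of the paper's factorisation $\nu(1,\zeta)=-\zeta^{-\frac{m+6}{4}}(\zeta-1)(\zeta+1)^{2}$.

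The genuine gap is the pair of subcases you defer, namely excluding $P(\zeta)=1$. The subcase $\zeta^{m/2}=-1$ is harmless: your reduction to $4u^{3}-4u\pm 1=0$ with $u=\sin\tfrac{\theta}{2}$ is correct and equivalent to the paper's finite verification that $(x-1)^{2}(x+1)^{4}+4x^{3}=x^{6}+2x^{5}-x^{4}-x^{2}+2x+1$ is divisible by no $\Phi_b(x)$ (only finitely many $b$ satisfy $\varphi(b)\le 6$), so this part could be completed as you sketch. But the subcase $\zeta^{m/2}=1$ --- which you yourself flag as ``the real obstacle'' --- is left as a plan that does not go through as advertised. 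Your reduced identity $2\sin((2t-1)\phi)-2\sin\phi=\pm\sin 4\phi$ is correct, but rewriting its left-hand side as $4\cos(t\phi)\sin((t-1)\phi)$ shows it is a product of two $t$-dependent factors; the single amplitude comparison $\left|\sin(\cdot)\right|=\left|\sin(\cdot)\right|\cdot\left|\cos(\cdot)\right|<\left|\sin(\cdot)\right|$ that pins $(2t-1)\bmod b$ down to a short list of residues in the proof of Lemma~\ref{caux_lemma} has no analogue here, and ``monotonicity and magnitude bounds'' would have to control two angles simultaneously for every residue of $t$ modulo $b$ --- a substantial new case analysis, not a routine rerun of that proof. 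The paper avoids this entirely by an observation you miss: when $\zeta^{m/2}=1$ the quantity $\nu(-1,\zeta)=P(\zeta)-1$ is exactly the eigenvalue expression of the $4t$-regular circulant nut graph $\mathcal{D}(m,t)$ of \cite{Damnjanovic2022b}, because $G$ is $\mathcal{D}(m,t)$ with the perfect matching $\tfrac{m}{2}$ added and the matching contributes $\zeta^{m/2}=1$, cancelling the $K_2$ eigenvalue $-1$; its nonvanishing for $\zeta\notin\{1,-1\}$ is then quoted from \cite[Section~4]{Damnjanovic2022b} rather than reproved. Without either that citation or a fully worked-out replacement for it, your proof of Lemma~\ref{cayley_lemma} is incomplete.
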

\begin{proof}
Let
\[
    G \coloneqq \Circ(m, \{1, 2, \ldots, t - 1\} \cup \{ \tfrac{m + 2}{4}, \tfrac{m + 6}{4} \} \cup \{\tfrac{m}{2} - (t - 1), \ldots, \tfrac{m}{2} - 2, \tfrac{m}{2} - 1\} \cup \{ \tfrac{m}{2} \}) .
\]
Note that $t - 1 < \tfrac{m + 2}{4}$ and $\tfrac{m + 6}{4} < \tfrac{m}{2} - (t - 1)$, hence $G$ and $G \cart K_2$ are $(4t + 1)$- and $(4t + 2)$-regular, respectively. We also observe that $G \cart K_2$ is a Cayley graph for the group $\mathbb{Z}_m \times \mathbb{Z}_2$. By Lemmas \ref{circ_lemma} and \ref{cart_lemma}, the spectrum of $G \cart K_2$ comprises the values $\nu(\lambda, \zeta)$, where
\[
    \nu(\lambda, \zeta) = \lambda + \left( \zeta^{\frac{m+2}{4}} + \zeta^{-\frac{m+2}{4}} \right) + \left( \zeta^{\frac{m+6}{4}} + \zeta^{-\frac{m+6}{4}} \right) + \zeta^\frac{m}{2} + \sum_{j = 1}^{t - 1} \left( \zeta^j + \zeta^{-j} \right) + \sum_{j = \frac{m}{2} - t + 1}^{\frac{m}{2} - 1} \left( \zeta^j + \zeta^{-j} \right),
\]
and $\lambda \in \{1, -1\}$, while $\zeta$ ranges over the $m$-th roots of unity. A straightforward computation gives
\[
    \nu(1, 1) = 4t + 2, \quad \nu(-1, 1) = 4t, \quad \nu(1, -1) = 0 \quad \mbox{and} \quad  \nu(-1, -1) = -2 .
\]
Furthermore, Lemma \ref{cart_lemma} implies that the eigenvector for the eigenvalue $\nu(1, -1) = 0$ is full. Therefore, $G \cart K_2$ is a nut graph if and only if $\nu(\lambda, \zeta) \neq 0$ for any $\lambda \in \{1, -1 \}$ and $m$-th root of unity $\zeta \notin \{1, -1\}$. Observe that
\begin{equation}\label{nu_exp}
    \nu(\lambda, \zeta) = \lambda + \left( \zeta^{\frac{m+6}{4}} + \zeta^{\frac{m+2}{4}} +  \zeta^\frac{m}{2} + \zeta^{-\frac{m+2}{4}} + \zeta^{-\frac{m+6}{4}} \right) + \sum_{j = 1}^{t - 1} \left( \zeta^j + \zeta^{-j} + \zeta^{\frac{m}{2} - j} + \zeta^{j - \frac{m}{2}} \right) .
\end{equation}
We now split the argument into two cases depending on the value of $\zeta^\frac{m}{2}$.

\medskip\noindent
\textbf{Case 1:} $\zeta^\frac{m}{2} = -1$.

In this case we have $\zeta^{\frac{m}{2} - j} = -\zeta^{-j}$ and $\zeta^{j - \frac{m}{2}} = - \zeta^j$, hence \eqref{nu_exp} simplifies to
\[
    \nu(\lambda, \zeta) = \lambda - 1 + \zeta^{\frac{m+6}{4}} + \zeta^{\frac{m+2}{4}} + \zeta^{-\frac{m+2}{4}} + \zeta^{-\frac{m+6}{4}} .
\]
If $\lambda = 1$, then
\begin{align*}
    \nu(1, \zeta) &= \zeta^{\frac{m+6}{4}} + \zeta^{\frac{m+2}{4}} + \zeta^{-\frac{m+2}{4}} + \zeta^{-\frac{m+6}{4}} = \zeta^{-\frac{m+6}{4}} \left( \zeta^{\frac{m}{2} + 3} + \zeta^{\frac{m}{2} + 2} + \zeta + 1 \right)\\
    &= \zeta^{-\frac{m+6}{4}} \left( -\zeta^3 - \zeta^2 + \zeta + 1 \right) = -\zeta^{-\frac{m+6}{4}} (\zeta - 1) (\zeta + 1)^2 ,
\end{align*}
which implies $\nu(1, \zeta) \neq 0$ provided $\zeta \notin \{ 1, -1 \}$. Now, for contradiction, suppose that $\nu(-1, \zeta) = 0$ holds for some $\zeta \notin \{ 1, -1 \}$. Since
\begin{align*}
    \nu(-1, \zeta) = -2 + \zeta^{\frac{m+6}{4}} + \zeta^{\frac{m+2}{4}} + \zeta^{-\frac{m+2}{4}} + \zeta^{-\frac{m+6}{4}} = -2 -\zeta^{-\frac{m+6}{4}} (\zeta - 1) (\zeta + 1)^2,
\end{align*}
we obtain
\[
    (\zeta - 1)^2 (\zeta + 1)^4 = \left( -2 \zeta^{\frac{m + 6}{4}} \right)^2 = 4 \zeta^{\frac{m}{2} + 3} = -4 \zeta^3 .
\]
Therefore, $\zeta$ is a root of the polynomial
\begin{equation}\label{caux_1}
    (x - 1)^2 (x + 1)^4 + 4x^3 = x^6 + 2x^5 - x^4 - x^2 + 2x + 1 .
\end{equation}
It is easy to verify that \eqref{caux_1} is not divisible by $\Phi_b(x)$ for any $b \in \mathbb{N}$, yielding a contradiction.

\medskip\noindent
\textbf{Case 2:} $\zeta^\frac{m}{2} = 1$.

In this case, $\zeta^{\frac{m}{2} - j} = \zeta^{-j}$ and $\zeta^{j - \frac{m}{2}} = \zeta^j$, hence \eqref{nu_exp} transforms to
\begin{align}
    \nu(\lambda, \zeta) = \lambda + 1 + \zeta^{\frac{m+6}{4}} + \zeta^{\frac{m+2}{4}} + \zeta^{-\frac{m+2}{4}} + \zeta^{-\frac{m+6}{4}} + 2 \sum_{j = 1}^{t - 1} \left( \zeta^j + \zeta^{-j} \right) . \label{eq:burek_1}
\end{align}
As shown in \cite[Section~4]{Damnjanovic2022b}, we have $\nu(-1, \zeta) \neq 0$ for any $\frac{m}{2}$-th root of unity $\zeta \notin \{ 1, -1 \}$. Now, for contradiction, suppose that $\nu(1, \zeta) = 0$ holds for some $\zeta \notin \{ 1, -1 \}$, and note that \eqref{eq:burek_1} becomes
\[
    \nu(1, \zeta) = \zeta^{\frac{m+6}{4}} + \zeta^{\frac{m+2}{4}} + \zeta^{-\frac{m+2}{4}} + \zeta^{-\frac{m+6}{4}} + 2 \sum_{j = -t + 1}^{t - 1} \zeta^j .
\]
A routine computation gives
\begin{equation}\label{caux_2}
    \zeta^{t - 1} (\zeta - 1) \, \nu(1, \zeta) = \zeta^{t + \frac{m + 6}{4}} - \zeta^{t + \frac{m - 2}{4}} + \zeta^{t - \frac{m + 2}{4}} - \zeta^{t - \frac{m + 10}{4}} + 2 \zeta^{2t - 1} - 2 .
\end{equation}
Let $\psi \in \mathbb{C}$ be such that $\psi^2 = \zeta$, so that \eqref{caux_2} becomes
\begin{align*}
    \zeta^{t - 1} (\zeta - 1) \, \nu(1, \zeta) &= 2 \psi^{4t - 2} + \psi^{2t + 3 + \frac{m}{2}} - \psi^{2t - 1 + \frac{m}{2}} + \psi^{2t - 1 - \frac{m}{2}} - \psi^{2t - 5 - \frac{m}{2}} - 2\\
    &= 2 \psi^{4t - 2} + \psi^{2t + 3 + \frac{m}{2}} - \psi^{2t - 5 - \frac{m}{2}} - 2 .
\end{align*}
Since $m \equiv 2 \pmod 4$, we observe that $\psi$ is an $m$-th root of unity whose order $b$ satisfies $b \ge 3$ and $b \not\equiv 0 \pmod 4$. Depending on whether $\psi^{\frac{m}{2}} = 1$ or $\psi^{\frac{m}{2}} = -1$, we have 
\[
    2 \psi^{4t - 2} + \psi^{2t + 3} - \psi^{2t - 5} - 2 = 0 \quad \mbox{or} \quad 2 \psi^{4t - 2} - \psi^{2t + 3} + \psi^{2t - 5} - 2 = 0 .
\]
In both cases, a contradiction follows from Lemma \ref{caux_lemma}.
\end{proof}

The cases $d \equiv 0 \pmod 4$ and $d \equiv 2 \pmod 4$ of Theorem \ref{second_th} follow directly from Theorem \ref{cayley_base_th} and Lemma \ref{cayley_lemma}, respectively.

\begin{remark}
Note that Lemma \ref{cayley_lemma} also shows that
\[
    \mathfrak{N}^{\mathrm{VT}}_d \supseteq \mathfrak{N}^{\mathrm{Cay}}_d \supseteq \{ 2d + 8, 2d + 16, 2d + 24, 2d + 32, \ldots \}
\]
for any $d \ge 6$ such that $d \equiv 2 \pmod 4$. Therefore, for any even $d \ge 4$, the sets $\mathfrak{N}^{\mathrm{VT}}_d$ and $\mathfrak{N}^{\mathrm{Cay}}_d$ are not only infinite, but they also have a natural density of at least $1/8$.
\end{remark}

\section{Concluding remarks}

We have shown that $d$-regular nut graphs exist for all $d \geq 3$ (Theorem~\ref{thm:main}). When $d \equiv 0 \pmod{2}$, nut graphs exist even within the class of $d$-regular Cayley graphs (Theorem~\ref{second_th}).
In this sense, Theorem~\ref{second_th} is a stronger version of Theorem~\ref{thm:main}, but only for even degree.

The proof of Theorem \ref{second_th} relies on the nut graphs constructed in Theorem \ref{cayley_base_th} and Lemma \ref{cayley_lemma}, all of which are explicitly determined. However, the present proof of Theorem~\ref{thm:main} is non-constructive in nature. 
That is to say, the proof does not give an explicit algorithmic construction of the nut graphs 
in question, as the intermediate quantity $\beta$ (and hence $\ell$) is not easy to determine.
For even degree $d$, we could replace this branch of the proof by simple invocation of Theorem~\ref{second_th}, but for the odd case this route is not available. 
% when it comes to the odd degrees, which are not covered by Theorem \ref{second_th}.
In the present section, we suggest a strategy that would lead to such a construction, based on a conjecture supported by numerical computation.

Recall that existence of $d$-regular nut graphs is guaranteed for $3 \leq d \leq 12$ (by Theorems~\ref{reg_base_th_1}, \ref{reg_base_th_2} and~\ref{reg_base_th_3}).
Recall also the following theorem from previous work.

\begin{theorem}[{\cite[Theorem 4]{Damnjanovic2022b}}]
For each $t \in \mathbb{N}$ and $n \geq 4t + 6$ such that $n \equiv 2 \pmod{4}$, the circulant graph
\[
\mathcal{D}(n, t) \coloneqq
\Circ(n, \{1, 2, \ldots, t - 1\} \cup \{(n + 2) / 4, (n + 6) / 4\} \cup \{n/2 - (t - 1), \ldots, n/2 - 2, n/2 - 1\})
\]
is a $4t$-regular nut graph of order $n$.
\end{theorem}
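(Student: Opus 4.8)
The plan is to read the spectrum of $\mathcal D(n,t)$ directly from Lemma~\ref{circ_lemma} and then show that its nullity is exactly one with a full kernel vector. Writing $n=4k+2$, the connection set consists of $\{1,\dots,t-1\}$, the two values $k+1=(n+2)/4$ and $k+2=(n+6)/4$, and $\{n/2-(t-1),\dots,n/2-1\}$; the hypothesis $n\ge 4t+6$ forces $t<k$, so these $2t$ values are distinct and none of them equals $n/2$, whence each contributes $2$ to the degree and $\mathcal D(n,t)$ is $4t$-regular. By Lemma~\ref{circ_lemma} the eigenvalues are $\Lambda(\zeta)=\sum_{s\in S}(\zeta^{s}+\zeta^{-s})$ as $\zeta$ runs over the $n$-th roots of unity, with associated eigenvector $[1\ \zeta\ \cdots\ \zeta^{n-1}]^{\intercal}$. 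Since every such eigenvector has only nonzero entries, the nut property reduces to the single requirement that $\Lambda(\zeta)=0$ for exactly one $\zeta$. As $\Lambda(\bar\zeta)=\Lambda(\zeta)$, a non-real zero would force multiplicity at least two, so the kernel root must be self-conjugate, i.e. $\zeta=\pm1$.

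First I would isolate that root. Using $\zeta^{\pm n/2}=\zeta^{n/2}\in\{1,-1\}$, the high block satisfies $\zeta^{n/2-j}+\zeta^{-(n/2-j)}=\zeta^{n/2}(\zeta^{j}+\zeta^{-j})$, so
\[
\Lambda(\zeta)=\bigl(1+\zeta^{n/2}\bigr)\sum_{j=1}^{t-1}\bigl(\zeta^{j}+\zeta^{-j}\bigr)+\zeta^{(n+2)/4}+\zeta^{-(n+2)/4}+\zeta^{(n+6)/4}+\zeta^{-(n+6)/4}.
\]
At $\zeta=1$ this gives $\Lambda(1)=4t\neq 0$, while at $\zeta=-1$ (legitimate since $n/2$ is odd) the prefactor $1+\zeta^{n/2}$ vanishes and the remaining four terms cancel, so $\Lambda(-1)=0$. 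Thus $-1$ is the intended kernel root, and it remains to prove $\Lambda(\zeta)\ne 0$ for every $\zeta\neq -1$. I would split on the sign $\zeta^{n/2}$.

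When $\zeta^{n/2}=-1$ the long sum disappears; factoring out $\zeta^{-(n+6)/4}$ and substituting $\zeta^{n/2}=-1$ collapses the bracket to $-(\zeta-1)(\zeta+1)^2$, so $\Lambda(\zeta)=-\zeta^{-(n+6)/4}(\zeta-1)(\zeta+1)^2$, exactly as in Case~1 of Lemma~\ref{cayley_lemma}. This is nonzero for all $\zeta\notin\{1,-1\}$, and since $\zeta=1$ is excluded in this case the claim follows immediately here. The genuinely hard case is $\zeta^{n/2}=1$, where $\zeta$ is a nontrivial $(n/2)$-th root of unity and $\zeta\neq-1$ holds automatically because $n/2$ is odd.

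For that case I would clear denominators by multiplying $\Lambda(\zeta)$ by $\zeta^{t-1}(\zeta-1)$ and passing to the unique root of unity $\psi$ with $\psi^{2}=\zeta$ (which exists as $n/2$ is odd); the two sums then telescope and $\Lambda(\zeta)=0$ becomes the vanishing of the Laurent polynomial
\[
2\psi^{4t+3}+\psi^{2t+8}-2\psi^{2t+5}+2\psi^{2t+3}-\psi^{2t}-2\psi^{5}.
\]
This is strictly richer than the polynomials $P_1,P_2$ of Lemma~\ref{caux_lemma}; the extra terms $-2\psi^{2t+5}+2\psi^{2t+3}$ appear precisely because the relevant central eigenvalue here is $-1$ rather than $+1$, so Lemma~\ref{caux_lemma} cannot be quoted directly. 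Instead I would convert the condition into a real trigonometric equation: with $\zeta=e^{i\theta}$, a product-to-sum simplification together with the Dirichlet-kernel identity for $\sum_{j=1}^{t-1}\cos j\theta$ turns $\Lambda(\zeta)=0$ into an equation of the form $\cos\frac{(n+4)\theta}{4}\sin\theta+2\sin\frac{(t-1)\theta}{2}\cos\frac{t\theta}{2}=0$, and one rules this out for every admissible $\theta=4\pi\ell/n$ by casework on the residue of the exponent modulo the order of $\zeta$, bounding products of cosines away from the critical value in the same spirit as the proof of Lemma~\ref{caux_lemma}. This trigonometric non-vanishing is where essentially all the difficulty lies; the regularity and fullness are immediate, and once $\Lambda(\zeta)=0\iff\zeta=-1$ is established the kernel is spanned by the full vector $[1\ -1\ \cdots\ -1]^{\intercal}$, so $\mathcal D(n,t)$ is a $4t$-regular nut graph.
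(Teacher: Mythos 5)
Your reductions are sound as far as they go: the degree count, the reduction of the nut property to ``$\Lambda(\zeta)=0$ exactly at $\zeta=-1$'' via conjugate pairs, the evaluation $\Lambda(1)=4t$, $\Lambda(-1)=0$, the factorisation $\Lambda(\zeta)=-\zeta^{-(n+6)/4}(\zeta-1)(\zeta+1)^2$ in the case $\zeta^{n/2}=-1$, the collapsed polynomial $2\psi^{4t+3}+\psi^{2t+8}-2\psi^{2t+5}+2\psi^{2t+3}-\psi^{2t}-2\psi^{5}$, and the trigonometric identity $\cos\tfrac{(n+4)\theta}{4}\sin\theta+2\sin\tfrac{(t-1)\theta}{2}\cos\tfrac{t\theta}{2}=0$ all check out (one slip: there are \emph{two} roots of unity with $\psi^{2}=\zeta$, namely $\pm\psi$; what you need is the unique \emph{$(n/2)$-th} root of unity squaring to $\zeta$, which exists because $n/2$ is odd, and it is that choice, forcing $\psi^{n/2}=1$, that legitimately suppresses the second sign branch $-\psi^{2t+8}+\psi^{2t}$). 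You are also right that Lemma~\ref{caux_lemma} cannot be quoted directly, since the extra terms $-2\psi^{2t+5}+2\psi^{2t+3}$ come from the shift by $-2$ relative to $\nu(1,\zeta)$.

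However, there is a genuine gap: the non-vanishing in the case $\zeta^{n/2}=1$ --- the entire analytic core of the theorem --- is asserted, not proved. At $\theta=4\pi\ell/n$ one has $\cos\tfrac{(n+4)\theta}{4}=(-1)^{\ell}\cos\theta$, so your equation becomes $\pm\tfrac12\sin 2\theta+\sin\tfrac{(2t-1)\theta}{2}-\sin\tfrac{\theta}{2}=0$, a three-frequency equation in which the $t$-dependent term and the fixed-frequency terms have comparable amplitudes. Unlike the situation in Lemma~\ref{caux_lemma}, where the identity \eqref{caux_4} immediately pins $(2t-1)\bmod b$ to a handful of residues via a strict magnitude inequality, nothing in your sketch shows that the proposed ``bounding products of cosines away from the critical value'' actually closes all residue classes for all $n$, $t$, $\ell$; one sentence of strategy is standing in for what is, in the literature, a full section of case analysis. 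Note that the paper under review does not prove this step either: the statement is imported verbatim as \cite[Theorem~4]{Damnjanovic2022b}, and when exactly your quantity $\Lambda(\zeta)$ reappears inside Lemma~\ref{cayley_lemma} (Case~2, as $\nu(-1,\zeta)$) the authors again cite Section~4 of \cite{Damnjanovic2022b} rather than argue it --- and, per the introduction, that source handles it by the Filaseta--Schinzel cyclotomic-divisibility method, not by the trigonometric bounding you propose. To complete your proof you must either invoke \cite{Damnjanovic2022b} for this non-vanishing, which reduces your write-up to a repackaging of the citation, or genuinely execute the casework, which you have not done.
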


We can obtain nut graphs with odd degrees $d \geq 13$ using Proposition~\ref{prop:theProp} by taking cartesian products of
members of the above family of circulant graphs with certain $3$- and $5$-regular nut graphs. For example,
let $F_3$ be the Frucht graph, and let $F_5$ be the graph whose complement is shown in Figure~\ref{fig:2}. 
Their characteristic polynomials are:
\begin{equation*}
\begin{aligned}
\Phi(F_3, x) & = x (x - 3) (x - 2)  (x + 1)  (x + 2)  (x^3 + x^2 - 2x - 1)  (x^4 + x^3 - 6x^2 - 5x + 4),\\
\Phi(F_5, x) & = x (x - 5) (x + 1)  (x^2 + x - 1)  (x^5 + 3 x^4 - 6 x^3 - 21 x^2 - x + 16).
\end{aligned}
\label{eq:charPoly}
\end{equation*}

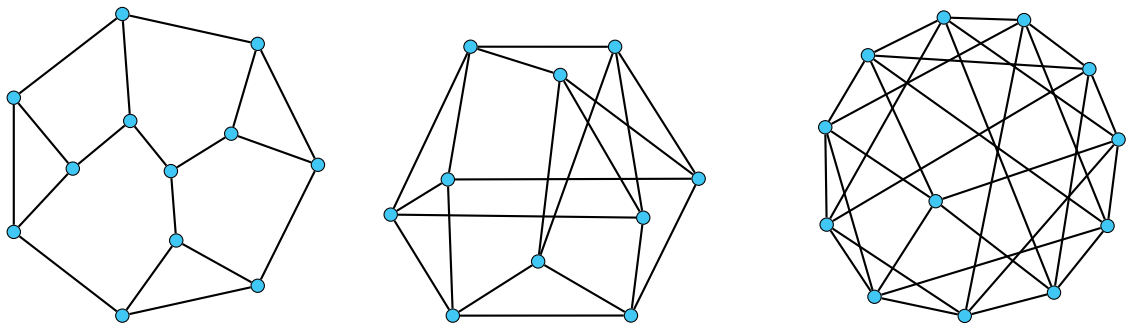
\begin{figure}[!htbp]
\centering
% K}GOOSE@?A_F
\subcaptionbox{The Frucht graph $F_3$.\label{fig:cubic3}}
{\begin{tikzpicture}[scale=0.5]
\tikzstyle{vertex}=[draw,circle,minimum size=5pt,inner sep=1pt,fill=cyan!60!white]
\tikzstyle{edge}=[draw,thick]
\node[vertex] (0) at (3.1428571428571423, 0.8571428571428563) {};
\node[vertex] (1) at (1.558957771787959, 4.064597491767673) {};
\node[vertex] (2) at (-2.0012938005390835, 4.857142857142858) {};
\node[vertex] (3) at (-4.857142857142858, 2.636446437329223) {};
\node[vertex] (4) at (-4.857142857142858, -0.9221607230435094) {};
\node[vertex] (5) at (-2.0012938005390835, -3.1428571428571423) {};
\node[vertex] (6) at (1.558957771787959, -2.3503117774819593) {};
\node[vertex] (7) at (0.8626055705301008, 1.6830379037343235) {};
\node[vertex] (8) at (-1.793998203054807, 2.020458207804947) {};
\node[vertex] (9) at (-3.3054447439353094, 0.7570938134940097) {};
\node[vertex] (10) at (-0.5844384546271328, -1.1497232536957893) {};
\node[vertex] (11) at (-0.7253189577717882, 0.690394451061445) {};
\path[edge] (0) -- (1);
\path[edge] (0) -- (6);
\path[edge] (0) -- (7);
\path[edge] (1) -- (2);
\path[edge] (1) -- (7);
\path[edge] (2) -- (3);
\path[edge] (2) -- (8);
\path[edge] (3) -- (4);
\path[edge] (3) -- (9);
\path[edge] (4) -- (5);
\path[edge] (4) -- (9);
\path[edge] (5) -- (6);
\path[edge] (5) -- (10);
\path[edge] (6) -- (10);
\path[edge] (7) -- (11);
\path[edge] (8) -- (9);
\path[edge] (8) -- (11);
\path[edge] (10) -- (11);
\end{tikzpicture}
}
% I~z@GsVBw
\subcaptionbox{Complement of the graph $F_5$.\label{fig:quintic5}}
{\quad\begin{tikzpicture}[scale=0.5]
\tikzstyle{vertex}=[draw,circle,minimum size=5pt,inner sep=1pt,fill=cyan!60!white]
\tikzstyle{edge}=[draw,thick]
\node[vertex] (0) at (-0.3961461536029489, 4.037291192002961) {};
\node[vertex] (1) at (1.0444420489822148, 4.7837993431029595) {};
\node[vertex] (2) at (1.4619269635504972, -2.3438056441274444) {};
\node[vertex] (3) at (-3.3550074363408178, 1.2648856857001736) {};
\node[vertex] (4) at (-4.857142857142858, 0.33298875833595076) {};
\node[vertex] (5) at (-3.223623336112798, -2.3468527198346854) {};
\node[vertex] (6) at (-2.758147923799228, 4.785714285714286) {};
\node[vertex] (7) at (-0.978752746296677, -0.9102738864029369) {};
\node[vertex] (8) at (1.7857142857142863, 0.25110514429792374) {};
\node[vertex] (9) at (3.2420824470944978, 1.284744801791595) {};
\path[edge] (0) -- (6);
\path[edge] (0) -- (7);
\path[edge] (0) -- (8);
\path[edge] (0) -- (9);
\path[edge] (1) -- (6);
\path[edge] (1) -- (7);
\path[edge] (1) -- (8);
\path[edge] (1) -- (9);
\path[edge] (2) -- (5);
\path[edge] (2) -- (7);
\path[edge] (2) -- (8);
\path[edge] (2) -- (9);
\path[edge] (3) -- (4);
\path[edge] (3) -- (5);
\path[edge] (3) -- (6);
\path[edge] (3) -- (9);
\path[edge] (4) -- (5);
\path[edge] (4) -- (6);
\path[edge] (4) -- (8);
\path[edge] (5) -- (7);
\end{tikzpicture}\quad
}
\caption{The regular nut graphs 
used in Conjecture~\ref{conj}
(drawn as the graph or its complement).}
\label{fig:2}
\end{figure}

We now propose the following conjecture.

\begin{conjecture}
\label{conj}
The following statements hold:
\begin{enumerate}[label=(\roman*)]
\item
$\mathcal{D}(4t + 6, t) \cart F_5$ is a nut graph of degree $4t + 5$ for every $t \geq 1$;
% \item
% $\mathcal{D}(4t + 6, t) \cart F_6$ is a nut graph of degree $4t + 6$ for every $t \geq 1$.
\item
$\mathcal{D}(4t + 6, t) \cart F_3$ is a nut graph of degree $4t + 3$ for $t \geq 1$ such that $t \not\equiv 0 \pmod{3}$;
\item
$\mathcal{D}(4t + 10, t) \cart F_3$ is a nut graph of degree $4t + 3$ for $t \geq 1$ such that $t \equiv 0 \pmod{3}$.
\end{enumerate}
\end{conjecture}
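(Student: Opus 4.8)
The plan is to establish all three statements uniformly through Proposition~\ref{prop:theProp}. Since $\mathcal{D}(n,t)$ is a $4t$-regular nut graph whenever $n \geq 4t+6$ and $n \equiv 2 \pmod 4$ (by \cite[Theorem~4]{Damnjanovic2022b}), and since $F_3$ and $F_5$ are nut graphs (the simplicity of the zero eigenvalue is visible from the displayed characteristic polynomials, and fullness of the kernel eigenvector is a finite check for these two fixed graphs), Proposition~\ref{prop:theProp} reduces each claim to a purely spectral statement for the relevant modulus (namely $n=4t+6$ in (i) and (ii), and $n=4t+10$ in (iii)): no nonzero eigenvalue $\mu$ of $F \in \{F_3,F_5\}$ satisfies $-\mu \in \sigma(\mathcal{D}(n,t))$. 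The stated degrees $4t+5$ and $4t+3$ are immediate, as a Cartesian product of a $4t$-regular graph with a $5$- or $3$-regular graph is $(4t+5)$- or $(4t+3)$-regular. The nonzero eigenvalues of $F$ are read off from the factorisations: for $F_3$ they are $3,2,-1,-2$ together with the roots of $x^3+x^2-2x-1$ and of $x^4+x^3-6x^2-5x+4$, and for $F_5$ they are $5,-1$ together with the roots of $x^2+x-1$ and of the quintic $x^5+3x^4-6x^3-21x^2-x+16$. I would organise the argument according to the arithmetic nature of these eigenvalues.

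The cleanest simplification rests on the observation that every eigenvalue of $\Circ(n,S)$ has the form $\sum_{s\in S}(\zeta^s+\zeta^{-s})$ for an $n$-th root of unity $\zeta$ (Lemma~\ref{circ_lemma}), hence lies in the maximal real subfield of $\mathbb{Q}(\zeta_n)$ and generates an \emph{abelian} extension of $\mathbb{Q}$. Consequently, if an eigenvalue $\mu$ of $F$ has $\mathbb{Q}(\mu)$ non-abelian over $\mathbb{Q}$, then neither $\mu$ nor $-\mu$ can be an eigenvalue of any circulant graph, so such $\mu$ contribute no obstruction for any $t$. The first step is therefore to compute, by computer algebra, the Galois groups of the higher-degree factors; I expect the quartic factor of $F_3$ and the quintic factor of $F_5$ to have non-abelian splitting fields (Galois group $S_4$, $D_4$ or $A_4$, respectively $S_5$, $D_5$ or $F_{20}$), which disposes of all of their roots in one stroke, uniformly in $t$.

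This leaves the \emph{abelian} eigenvalues: the integers $3,2,-1,-2$ of $F_3$ and $5,-1$ of $F_5$; the roots of $x^2+x-1$, which lie in $\mathbb{Q}(\sqrt5)=\mathbb{Q}(\zeta_5)^+$; and the roots of $x^3+x^2-2x-1$, i.e.\ $2\cos\frac{2\pi}{7}$ and its conjugates, which lie in $\mathbb{Q}(\zeta_7)^+$. For an abelian irrational $\mu$ of conductor $c$ (so $c=5$ or $c=7$), a relation $-\mu=\sum_{s\in S}(\zeta^s+\zeta^{-s})$ with $\zeta$ of order $b\mid n$ forces $\mathbb{Q}(\mu)\subseteq\mathbb{Q}(\zeta_b)$, hence $c\mid n$; this confines attention to one residue class of $t$ (for instance $c=7$ requires $t\equiv 2\pmod 7$ when $n=4t+6$). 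For the integer values no such restriction is available, and these are the substantive cases. For each target $-\mu$ I would split on $\zeta^{n/2}=\pm 1$ exactly as in the proof of Lemma~\ref{cayley_lemma}, insert the closed forms for $\sum_{s\in S}(\zeta^s+\zeta^{-s})$ developed in \cite[Section~4]{Damnjanovic2022b}, and rewrite the equation as the statement that a root of unity is a zero of an explicit polynomial with coefficients in $\mathbb{Z}$ (respectively in $\mathbb{Q}(\mu)$). Passing to $\Phi_b(x)$ and the substitution $\zeta=e^{2\pi i/b}$ turns this into a trigonometric equation that I would rule out by the identity-and-monotonicity method of Lemma~\ref{caux_lemma}: check the small orders $b$ directly and bound a cosine product away from the offending value for all large $b$.

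The main obstacle is the uniform-in-$t$ treatment of the small integer eigenvalues, and it is precisely here that the congruence hypotheses originate. When $3\mid t$ and $n=4t+6$, the primitive cube root $\omega_3$ \emph{is} an $n$-th root of unity, and a direct computation gives $\sum_{s\in S}(\omega_3^{\,s}+\omega_3^{-s})=-3$; since $3$ is the Perron eigenvalue of $F_3$ and $3=-(-3)$, Proposition~\ref{prop:theProp} shows that $\mathcal{D}(4t+6,t)\cart F_3$ is \emph{not} a nut graph in this case. This is exactly why part~(iii) replaces $n=4t+6$ by $n=4t+10$, for which $3\nmid n$ and the offending root of unity disappears. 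The delicate work is thus twofold: proving the required trigonometric inequalities uniformly in the order $b$ (the number of small orders needing a direct check grows with the size of the fixed graph $F$), and re-verifying, for the shifted modulus $n=4t+10$ of part~(iii), that no new low-order root of unity produces a collision with $\sigma(F_3)$.
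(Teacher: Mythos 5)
You should first be aware that the statement you set out to prove is, in the paper, exactly that: Conjecture~\ref{conj}. The authors give no proof; they report only that computations confirm it for all $t$ with product degree below $1000$, and they remark that a proof would upgrade Theorem~\ref{thm:main} to a constructive statement. So there is no proof in the paper to compare against, and your proposal must stand on its own merits.

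Much of your skeleton is sound and two of your observations are genuinely valuable. The reduction via Proposition~\ref{prop:theProp} and the degree count are correct. Your Galois-theoretic remark is correct and effective: every eigenvalue of $\Circ(n,S)$ lies in the real subfield of $\mathbb{Q}(\zeta_n)$, hence generates an abelian extension of $\mathbb{Q}$, so any root of the quartic factor of $\Phi(F_3,x)$ or the quintic factor of $\Phi(F_5,x)$ is disposed of uniformly in $t$ once one verifies (a finite computation, which you only announce as an expectation) that those factors have non-abelian splitting fields; likewise the conductor argument correctly confines the $\mathbb{Q}(\sqrt5)$- and $\mathbb{Q}(\zeta_7)^{+}$-eigenvalues to the residue classes $t \equiv 1 \pmod 5$ and $t \equiv 2 \pmod 7$ (for $n = 4t+6$). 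Your computation that $\sum_{s \in S}\left(\omega_3^{\,s} + \omega_3^{-s}\right) = -3$ when $3 \mid t$ and $n = 4t+6$ checks out, matches the paper's closing remark that $\mathcal{D}(4t+6,t)$ then has eigenvalue $-3$, and, via the ``only if'' direction of Proposition~\ref{prop:theProp}, actually proves that the exclusion $t \not\equiv 0 \pmod 3$ in part~(ii) is necessary. This correctly diagnoses where the congruence hypotheses come from.

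The genuine gap is that everything which makes the conjecture hard is deferred to ``I would split \dots and rule out by the identity-and-monotonicity method of Lemma~\ref{caux_lemma}.'' The substantive content of the conjecture is a family of non-collision statements, uniform in $t$: that $-3$, $\pm 2$ and $1$ (against $\sigma(F_3)$), and $-5$, $1$ and the conjugates of $(-1\pm\sqrt5)/2$ (against $\sigma(F_5)$), are never eigenvalues of $\mathcal{D}(n,t)$ for the relevant $n$, together with the conductor-$7$ and conductor-$5$ cases on the exceptional residue classes and a full re-run for $n = 4t+10$ in part~(iii). Lemma~\ref{caux_lemma} succeeds because the target value $0$ telescopes, as in \eqref{caux_2}, into precisely two fixed four-term lacunary polynomials; a nonzero integer target $-\mu$ adds a term $\mu\,\zeta^{t-1}(\zeta-1)$ to that identity, producing, for each $\mu$, its own family of lacunary equations with $t$ in the exponents, and nothing in your sketch shows that the small-$b$ check plus cosine-product monotonicity closes all of these uniformly in $b$ and $t$. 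Moreover, for the irrational abelian eigenvalues the step ``$\Phi_b(x) \mid P(x)$, hence $e^{2\pi i/b}$ is a root'' is unavailable as stated, since $P$ then has coefficients in $\mathbb{Q}(\mu)$ rather than $\mathbb{Q}$; one must pass to the norm $\prod_\sigma P^\sigma(x) \in \mathbb{Z}[x]$ and accept that the normalised root of unity satisfies only a Galois-conjugated relation, which changes the resulting trigonometric identities. These unexecuted cases are precisely what the authors could verify only computationally, so your text is a credible and well-aimed programme — with a correct explanation of the conjecture's shape — but not a proof.
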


Statement (i) resolves the case $d \equiv 1 \pmod{4}$.
Statements (ii) and (iii) taken together resolve the case $d \equiv 3 \pmod{4}$. 
Note that the graph $\mathcal{D}(4t + 6, t)$ has an eigenvalue $-3$ when $t \equiv 0 \pmod{3}$. 

Computations show that the conjecture is true for all values of $t$ such that the degree of the obtained product graph is less than $1000$.
This implies that we have constructed a $d$-regular nut graph for each odd $3 \leq d < 1000$.
Proof of Conjecture~\ref{conj} would furnish a fully constructive proof of Theorem~\ref{thm:main}.

\section*{Acknowledgements}
The work of Nino Bašić is supported in part by the Slovenian Research Agency (research program P1-0294
and research projects J5-4596 and J1-2481).
Ivan Damnjanović is supported by the Ministry of Science, Technological Development and Innovation of the Republic of Serbia, grant number 451-03-137/2025-03/200102, 
and the Science Fund of the Republic of Serbia, grant \#6767, Lazy walk counts and spectral radius of threshold graphs --- LZWK.
Patrick Fowler thanks the Leverhulme Trust for an
Emeritus Fellowship on 
{\lq Modelling molecular currents, conduction and aromaticity\rq} and the Francqui Foundation for award of
an International Francqui Professorship.

\end{document}